\documentclass[11pt,reqno]{amsart}

\usepackage[T1]{fontenc}
\usepackage{lmodern}
\usepackage{microtype}
\usepackage{amsmath,amssymb,amsthm,mathtools}
\usepackage{enumitem}
\usepackage{xcolor}
\usepackage{hyperref}
\usepackage[nameinlink,capitalize,noabbrev]{cleveref}

\hypersetup{
  colorlinks=true,
  linkcolor=blue!60!black,
  citecolor=blue!60!black,
  urlcolor=blue!60!black
}

\numberwithin{equation}{section}

\newtheorem{theorem}{Theorem}[section]

\theoremstyle{definition}

\newtheorem{example}[theorem]{Example}
\theoremstyle{remark}

\newif\ifdraftnotes
\draftnotestrue

\newcommand{\HH}{\mathcal H}
\newcommand{\GG}{\mathcal G}

\newcommand{\EE}{\mathcal E}

\newcommand{\ZZ}{\mathbb Z}

\newcommand{\Lip}{\operatorname{Lip}}

\newcommand{\LLY}{\mathrm{LLY}}

\newcommand{\Wass}{W_{1}}

\newcommand{\edgecon}{\lambda}
\newcommand{\binomset}[2]{\binom{#1}{#2}}

\title[Edge-connectivity and LLY curvature of hypergraphs]
{Edge-Connectivity and Lin--Lu--Yau Curvature of Hypergraphs}

\author{Qing Xia}
\address{School of Mathematical Sciences\\
University of Science and Technology of China\\
96 Jinzhai Road\\
Hefei 230026, Anhui Province\\
China}
\email{xq0420@mail.ustc.edu.cn}
\date{\today}

\subjclass[2020]{05C65, 05C40, 53C21}
\keywords{Wasserstein distance, Ollivier--Ricci curvature,
Lin--Lu--Yau curvature, hypergraph, edge-connectivity, linear hypergraph}

\begin{document}

\begin{abstract}
Chen, Liu, and You \cite{ChenLiuYou2025} proved that a locally finite connected graph with positive
Lin--Lu--Yau curvature has edge-connectivity equal to its minimum
degree.  Liu and Xia \cite{LiuXia2026} subsequently showed that the same conclusion
holds for every finite connected graph with nonnegative
Lin--Lu--Yau curvature and classified all infinite exceptions.

We investigate the corresponding problem for the random-walk
curvature of hypergraphs introduced by Tian and Zhao \cite{TianZhao2025}.  We formulate
a hypergraph analogue of the combinatorial inequality used by Liu
and Xia \cite{LiuXia2026} and use it to study edge cuts in uniform linear hypergraphs. Our first main result asserts that every locally finite connected
$r$-uniform linear hypergraph, $r\geq 3$, with nonnegative
Lin--Lu--Yau curvature has edge-connectivity equal to its minimum
incidence degree. Both the uniformity and linearity assumptions are
essential. On the one hand, for every $r\geq 3$ and every integer
$t\geq 2$, we construct a finite connected simple nonlinear
$r$-uniform hypergraph with positive Lin--Lu--Yau curvature such that
its edge-connectivity is $t$ less than its minimum degree. On the other
hand, for every integer $t\geq 1$, we construct a finite connected
simple linear nonuniform hypergraph with positive Lin--Lu--Yau curvature such that its edge-connectivity is also $t$ less than its minimum degree. Consequently, neither uniformity nor linearity alone is sufficient for
the edge-connectivity rigidity in the hypergraph setting.
\end{abstract}

\maketitle

\section{Introduction and statement of results}

Ricci curvature is a fundamental local invariant in Riemannian
geometry, with strong global consequences for diameter, volume
growth, topology, and spectral behavior.  Ollivier
\cite{Ollivier2009} introduced a synthetic coarse Ricci curvature--Ollivier-Ricci curvature--for metric spaces equipped with Markov kernels.  If $\mu_x$ and $\mu_y$
denote the one-step probability distributions based at $x$ and
$y$, respectively, then Ollivier-Ricci curvature measures the contraction
of these distributions in the $1$-Wasserstein metric.

For locally finite graphs, Lin, Lu, and Yau
\cite{LinLuYau2011} introduced a high-idleness normalization of
Ollivier-Ricci curvature, now called the Lin--Lu--Yau curvature.  The
dependence of Ollivier-Ricci curvature on the idleness parameter was later
studied systematically by Bourne, Cushing, Liu, Muench, and
Peyerimhoff \cite{BourneEtAl2018}, who proved that the idleness
function of an edge is concave and piecewise affine with at most
three affine pieces.  This description supplies useful
limit-free formulas for the Lin--Lu--Yau curvature.

A recent line of work relates discrete curvature to graph
connectivity.  Chen, Liu, and You \cite{ChenLiuYou2025} proved,
among other results, that the vertex-connectivity of a connected
graph $G=(V,E)$ is bounded below in terms of its minimum degree $\delta(G)$ and its
Lin--Lu--Yau curvature lower bound.  They also established a sharp
converse criterion based on large vertex-connectivity $\lambda(G)$ and proved
the following edge-connectivity rigidity statement:
\begin{theorem}[Chen--Liu--You {\cite[Theorem 1.5]{ChenLiuYou2025}}]
\label{thm:CLY}
Let $G$ be a connected locally finite graph.  If $G$ has positive
Lin--Lu--Yau curvature, then
\[
   \edgecon(G)=\delta(G).
\]
\end{theorem}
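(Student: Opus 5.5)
We argue by contradiction against a minimum edge cut. Suppose $\edgecon(G)<\delta(G)$, and let $F=E(S,\bar S)$ be an edge cut with $|F|=\edgecon(G)$, where $S\subset V$ is finite and nonempty. Such a finite side exists: positive Lin--Lu--Yau curvature forces $G$ to be finite. Indeed, the curvature is bounded below by a positive constant on every edge once $G$ has bounded degree, and the Bonnet--Myers type bound then applies. In general, for a locally finite graph we may simply choose the cut so that one side is finite; if both sides were infinite we would replace $S$ by a suitable smaller finite set, and we expect minimality to rule that case out. We record the standard count
\[
|F| \;=\; \sum_{x\in S}\deg(x) \;-\; 2|E(S)| \;\ge\; |S|\,\delta \;-\; |S|(|S|-1).
\]
Hence $|F|<\delta$ forces $|S|\ge \delta+1$ and $|S|\ge 2$. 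So both sides contain vertices none of whose neighbours lie across the cut, as well as many internal edges.

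The key step is to show that each cut edge $xy\in F$, with $x\in S$ and $y\in\bar S$, has $\kappa_{\LLY}(x,y)\le 0$ under the hypotheses. We use the limit-free formula of Bourne et al.\ (or Münch--Wojciechowski):
\[
\kappa(x,y)=\inf_{\substack{f\in \Lip(1)\\ \nabla_{yx}f=1}} \nabla_{xy}\Delta f .
\]
It therefore suffices to exhibit a $1$-Lipschitz test function $f$ with $f(y)-f(x)=1$ and $\Delta f(x)-\Delta f(y)\le 0$. The natural choice is the cut indicator: $f=0$ on $S$ and $f=1$ on $\bar S$. This gives
\[
\Delta f(x)=\frac{|F_x|}{\deg x},\qquad \Delta f(y)=-\frac{|F_y|}{\deg y},
\]
so the indicator yields $\kappa\le |F_x|/\deg x+|F_y|/\deg y$, which is positive and useless on its own. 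We therefore refine $f$ to a distance-type function. Set $f(z)=\min\{d(z,\bar S),\,2\}$ on $S$, negated appropriately, or more precisely $f(z)=d(z,S)-d(z,\bar S)$ clipped to $[-1,1]$, shifted so that $f(x)=0$ and $f(y)=1$. Then neighbours of $x$ not incident to $F$ receive value $\le 0$, and we exploit the fact that most neighbours of $x$ lie inside $S$.

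The refinement alone is delicate, so the main plan is a global averaging argument. We sum the curvature inequality over all cut edges, or alternatively apply the transport estimate to a pair $x\in S$, $y\in\bar S$ at distance $1$ that are chosen extremally. Concretely, we pick the cut edge $xy$ and build a transport plan lower bound in reverse. Positive curvature means $W_1(\mu_x^\alpha,\mu_y^\alpha)<1$ for small idleness $\alpha$. By Kantorovich duality with $f$ the $1$-Lipschitz function $d(\cdot,\bar S)-d(\cdot,S)$, the mass of $\mu_x$ that must cross the cut is at least the mass on the $S$ side, which is roughly $1-|F_x|/\deg x$, because every path from $S$ to $\bar S$ uses an edge of $F$. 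Because $|F|<\delta\le \deg x$, the number of neighbours of $x$ across the cut is small. Using the fact that $f$ takes value $\le -1$ on neighbours of $x$ that have no $F$-edge, we obtain
\[
\nabla_{xy}\Delta f \le \frac{|F_x| - \#\{\text{nbrs of }x\text{ with no }F\text{-edge}\}}{\deg x}+\frac{|F_y|-\#\{\cdots\}}{\deg y}.
\]
Since at most $|F|-1<\deg x-1$ neighbours of $x$ are incident to $F$ other than through $xy$, this bound should be $\le 0$, which contradicts positivity.

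The main obstacle is the exact bookkeeping in this last inequality: neighbours $z$ of $x$ that are themselves endpoints of other cut edges have $f(z)=0$ rather than $-1$. The count has to show that the number of neighbours with $f=-1$ is at least the number across the cut, using $|F|\le\delta-1$ shared between $x$ and $y$. If the bound is tight, I would pick the cut edge with $x$ minimizing $|F_x|$, or use the finite-side count above, to gain the needed slack.
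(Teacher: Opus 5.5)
\textbf{Gap: the key step is false.} You claim that, when $\edgecon(G)<\delta(G)$, every edge of a minimum cut has $\kappa_{\LLY}\le 0$. This is not true, and your counting cannot prove it.

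Use the test function that actually gives your displayed bound: $0$ on $V(F)\cap S$, $-1$ on $S\setminus V(F)$, $1$ on $V(F)\cap\bar S$, and $2$ on $\bar S\setminus V(F)$. (Your clipped $d(\cdot,S)-d(\cdot,\bar S)$ equals $-1$ on all of $S$ and $+1$ on all of $\bar S$, so it is not $1$-Lipschitz.) Let $a_x$ be the number of cut edges at $x$, and $c_x$ the number of neighbours of $x$ on its own side that lie in $V(F)$. Then the bound reads
\[
\kappa_{\LLY}(x,y)\le \frac{2a_x+c_x}{d_x}+\frac{2a_y+c_y}{d_y}-2 .
\]
Your observation that at most $|F|-1$ neighbours of $x$ touch $F$ other than through $xy$ only gives $a_x+c_x\le |F|<d_x$. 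That guarantees one neighbour with value $-1$, not the $a_x$ such neighbours you need.

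\textbf{Counterexample to the per-edge claim.} Take two disjoint copies of $K_5$, on $\{p,z,s_1,s_2,s_3\}$ and $\{q,w,t_1,t_2,t_3\}$. Add vertices $x,y$ and the edges $xp,xz,yq,yw,xy,xq,py$.
\begin{itemize}
\item The minimum degree is $\delta=4$, attained at $x,y,s_i,t_i$.
\item $F=\{xy,xq,py\}$ is a minimum cut, so $\edgecon=3<\delta$.
\item At $\alpha=1/2$, keep the common mass at $x,y,p,q$, move $3/8$ from $x$ to $y$, and move $1/8$ from $z$ to $w$ (distance $3$). This plan costs $3/4$.
\item Your own test function gives the matching Kantorovich lower bound $3/4$, so this plan is optimal.
\end{itemize}
Hence $\kappa_{\LLY}(x,y)=2(1-3/4)=1/2>0$. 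The cut edge $py$ does have $\kappa_{\LLY}\le -1/4$, consistent with the theorem. So the proof must \emph{select} a suitable cut edge. Neither an unspecified ``minimize $|F_x|$'' rule nor the count $|S|\ge \delta+1$ does this.

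\textbf{What supplies the missing step.} The paper only cites this theorem and does not reprove it. However, the Liu--Xia cut strategy it transfers in the proof of Theorem~\ref{thm:linear-main} provides the missing selection. Let $G_0$ be the bipartite graph of cut edges, with $m=|F|$ and $V_0=V(F)$. Then $a_x+a_y-2=|S^{G_0}_{xy}|$, $c_x+c_y\le |V_0|-2$, and $\min\{d_x,d_y\}\ge \delta\ge m+1$. So the bound above is $\le 0$ as soon as $|S^{G_0}_{xy}|\le m-|V_0|/2$.
\begin{itemize}
\item If $G_0$ is not a star, such an edge exists by \cite[Theorem 1.11]{LiuXia2026}. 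This is the same inequality used in Case~1 of the proof of Theorem~\ref{thm:comb-main}.
\item If $G_0$ is a star with centre $x$ and $S\neq\{x\}$, moving $x$ to the other side and using minimality of the cut gives $d_x\ge 2m=2a_x$, and $c_x=0$. A leaf $y$ has $a_y=1$ and $c_y\le m-1$, so $2a_y+c_y\le m+1\le d_y$. Again $\kappa_{\LLY}(x,y)\le 0$.
\end{itemize}

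\textbf{Minor point on finiteness.} Your claim that positive curvature forces $G$ to be finite is unjustified without bounded degree, since Bonnet--Myers needs a uniform positive lower bound. It is also unnecessary: only the finiteness of the cut, $|F|\le\delta$, is used.
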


Chen, Liu, and You asked whether positivity can be weakened to
nonnegativity for finite graphs.  Liu and Xia
\cite{LiuXia2026} answered this question affirmatively:
\begin{theorem}[Liu--Xia {\cite[Theorems 1.2 and 1.3]{LiuXia2026}}]
\label{thm:LX}
Let $G$ be a connected locally finite graph with nonnegative
Lin--Lu--Yau curvature.  Then
\[
   \edgecon(G)\ge \delta(G)-1.
\]
If, in addition, $G$ is finite, then
\[
   \edgecon(G)=\delta(G).
\]
Moreover, all connected graphs with nonnegative Lin--Lu--Yau
curvature satisfying
$\edgecon(G)=\delta(G)-1$ are infinite and admit an explicit
classification.
\end{theorem}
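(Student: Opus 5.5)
The plan is to prove the Liu--Xia statement directly from the transport (Wasserstein) characterization of Lin--Lu--Yau curvature, which in this paper's notation is
\[
\kappa(x,y)=\lim_{\alpha\to1}\frac{1-\Wass(\mu_x^\alpha,\mu_y^\alpha)}{1-\alpha},
\]
together with the limit-free formula of Bourne et al.\ \cite{BourneEtAl2018}. This formula lets us write, for an edge $xy$, $\kappa(x,y)=\frac{1}{d_x}\inf_{f}\bigl(\ldots\bigr)$ in terms of $1$-Lipschitz functions; equivalently, it bounds $\kappa(x,y)$ above by $\nabla_{yx} f$-type expressions for any $1$-Lipschitz $f$ with $f(y)-f(x)=1$.

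The argument then proceeds in four steps.

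\textbf{Step 1 (setup).} Take a minimum edge cut $\partial S$ with $|\partial S|=\edgecon(G)<\delta(G)$. Choose $S$ so that $|\partial S|$ is minimal, and, in the finite case, so that $|S|$ is also minimal among such cuts.

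\textbf{Step 2 (combinatorial inequality).} Pick a cut edge $xy$ with $x\in S$, $y\notin S$. Test against the $1$-Lipschitz function $f=\mathbf 1_{V\setminus S}$, perturbed by distance to the cut. This gives an inequality of the form
\[
d_x\,\kappa(x,y)\le 2+|\partial S\cap E(x)|+|\partial S\cap E(y)|-\#\{\text{neighbours of }x\text{ in }S\}\cdots
\]
Rearranged, the inequality says: nonnegative curvature forces the number of cut edges at $x$ and $y$ to be large compared with the inside-degree of $x$. Summing this inequality over all cut edges, using $|\partial S|<\delta$, and observing that every vertex of $S$ incident to the cut has at least $\delta-|\partial S\cap E(x)|$ neighbours inside $S$, gives a contradiction unless $|\partial S|\ge\delta-1$. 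This yields the general bound $\edgecon(G)\ge\delta(G)-1$.

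\textbf{Step 3 (the equality case).} Suppose $\edgecon=\delta-1$. Then every inequality in Step 2 is tight. This forces rigid local structure: each cut edge $xy$ has $\kappa(x,y)=0$, the cut edges form a matching between the boundary vertices, and the endpoints have equal degree $\delta$. Propagating tightness inward, we find that $S\setminus\partial S$ again has a cut of size $\delta-1$. In the finite case this contradicts the minimality of $|S|$, so $\edgecon=\delta$ when $G$ is finite. In the infinite case the propagation produces a periodic, ladder-like layered structure along which the cut repeats, and this structure is what yields the explicit classification.

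\textbf{Main obstacle.} The hard part is Step 3: extracting the exact layered structure from the equality conditions in the transport plan. This requires controlling which optimal couplings exist at zero curvature. I would first try to use the concavity of the idleness function to pin down the optimal plan uniquely.
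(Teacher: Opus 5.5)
The paper does not prove this statement; it quotes it from Liu--Xia. Its ingredients reappear in Sections 3--4:
\begin{enumerate}
\item a single-edge bound from a layered $1$-Lipschitz test function, involving $|V_0|$ and $|S^{G_0}_{xy}|$ (Liu--Xia, Theorem 4.1);
\item a separate bound for star-shaped cuts (Theorem 4.2);
\item the selection inequality $\min_{xy}|S^{G_0}_{xy}|\le |E_0|-|V_0|/2$ (Theorem 1.11);
\item for a star cut, the bound $d_x\ge 2m$, obtained by moving the centre across the cut.
\end{enumerate}
Measured against this, your Step 2 is missing the inputs that make it work. Write $X=S$, $Y=V\setminus S$, $X_0=X\cap V_0$, $Y_0=Y\cap V_0$ and $m=|\partial S|$. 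With your ``perturbed'' test function (values $-1,0,1,2$ on $X\setminus X_0$, $X_0$, $Y_0$, $Y\setminus Y_0$) one gets, for a cut edge $xy$,
\[
\kappa(x,y)\le \frac{2d^{G_0}_x+|\Gamma(x)\cap X_0|}{d_x}+\frac{2d^{G_0}_y+|\Gamma(y)\cap Y_0|}{d_y}-2 .
\]
Neighbours of $x$ in $S$ that themselves lie on the cut enter with a \emph{positive} sign. So knowing that $x$ has at least $\delta-d^{G_0}_x$ neighbours in $S$ gives you nothing.

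Concretely, suppose you only use $d_v\ge\delta\ge m+2$. Take a star cut with $m\ge3$, leaves spanning a clique, and all degrees equal to $m+2$. Every one of these bounds then equals $(m-3)/(m+2)\ge0$, so no summation yields a contradiction. The missing idea is minimality of the cut applied to single vertices. If $|X|\ge2$, moving $v\in X_0$ across gives $d_v\ge 2d^{G_0}_v$; if $|X|=1$, then $m=d_x\ge\delta$ outright. With this and $|\Gamma(x)\cap X_0|\le|X_0|-1$, your averaging route can be completed. For $m\le\delta-2$, the sum of the bounds over $\partial S$ is at most
\[
\frac1{m+2}\sum_{v\in V_0}\bigl(m-d^{G_0}_v-d^{G_0}_v(m+2-2d^{G_0}_v)_+\bigr),
\]
and an elementary count shows this is negative (each side has at most one vertex with $d^{G_0}_v>m/2$). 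That would be a genuinely selection-free alternative to Liu--Xia for $\lambda\ge\delta-1$. As written, however, the step is not a proof.

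Step 3 carries the finite-case equality and the classification, and it is both unproved and wrong in its predictions. For $k\ge2$, take cliques $K^{(i)}\cong K_k$ ($i\in\mathbb{Z}$) and vertices $h_i$, with $h_i$ joined to every vertex of $K^{(i)}\cup K^{(i+1)}$. Clique edges have curvature $(k+2)/(k+1)$. Hub edges have curvature $0$: the position-along-the-chain function gives $\le0$, and an explicit coupling at idleness $1/2$ gives $\ge0$. Here $\delta=k+1$ and $\lambda=k=\delta-1$.

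Yet every minimum edge cut is a star of $k$ edges centred at a hub of degree $2k\ne\delta$. It is not a matching, and its endpoints do not have equal degree $\delta$. So tightness cannot force the structure you claim. The equality analysis must also allow star-type cuts, which is why Liu--Xia need a separate star estimate. Your inward propagation must likewise be able to peel off a single centre, not only a boundary layer.

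Finally, concavity of the idleness function concerns $\alpha\mapsto\kappa_\alpha$ and says nothing about uniqueness of optimal couplings. It cannot supply the missing equality analysis. As it stands, the proposal establishes neither $\lambda=\delta$ for finite $G$ nor the classification.
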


We now turn to hypergraphs.  Several inequivalent extensions of
Ollivier-type curvature have appeared, including multi-marginal
transport curvature \cite{AsoodehGaoEvans2018}, directed
hypergraph curvature \cite{EidiJost2020}, nonlinear-Laplacian
approaches \cite{IkedaEtAl2022}, and the flexible ORCHID framework
\cite{CoupetteEtAl2023}.  Tian and Zhao
\cite{TianZhao2025} introduced a random walk on undirected and
directed hypergraphs which first chooses an incident hyperedge and
then chooses another vertex in that hyperedge.  They used this walk
to define both Ollivier--Ricci and Lin--Lu--Yau curvature.

For the Tian--Zhao walk, Xia \cite{Xia2026} proved that the
idleness function of every adjacent vertex pair in a locally finite
simple hypergraph is piecewise affine with at most three
affine pieces.  More importantly for the present paper, it is
linear on $[1/2,1]$.  Hence
\begin{equation}
\label{eq:limit-free-intro}
   \kappa_{\LLY}^{\HH}(x,y)
   =\frac{\kappa_{\alpha}^{\HH}(x,y)}{1-\alpha},
   \qquad \alpha\in[1/2,1),
\end{equation}
for every adjacent pair $x\sim_{\HH}y$.  In particular,
\begin{equation}
\label{eq:half-to-lly}
   \kappa_{\LLY}^{\HH}(x,y)
   =2\kappa_{1/2}^{\HH}(x,y).
\end{equation}
For an $r$-uniform linear hypergraph, the Tian--Zhao walk agrees
with the simple random walk on its $2$-section, and the sharper
graph-theoretic endpoint interval transfers verbatim
\cite{Xia2026}.

Our first result is the linear-hypergraph counterpart of \cref{thm:LX}.
\begin{theorem}
\label{thm:linear-main}
Let $\HH=(V,E)$ be a locally finite connected simple $r$-uniform linear
hypergraph, where $r\ge 3$.  If $\HH$ has nonnegative
Lin--Lu--Yau curvature,
then
\[
   \edgecon(\HH)=\delta(\HH).
\]
\end{theorem}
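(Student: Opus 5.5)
We argue by contradiction, following the strategy of Liu and Xia \cite{LiuXia2026} but working in the $2$-section. Let $\HH$ be as in \cref{thm:linear-main} and let $\GG$ denote its $2$-section. Because $\HH$ is $r$-uniform and linear, each vertex $x$ of incidence degree $d(x)$ has exactly $(r-1)d(x)$ neighbours in $\GG$, and these split into $d(x)$ disjoint cliques of size $r-1$. The Tian--Zhao walk is the simple random walk on $\GG$, so $\kappa_{\LLY}^{\HH}(x,y)=\kappa_{\LLY}^{\GG}(x,y)$ and $\GG$ has nonnegative Lin--Lu--Yau curvature.

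Suppose $\edgecon(\HH)<\delta(\HH)$. Choose a minimum hyperedge cut $F$, with $|F|\le\delta-1$, and a side $S$ of $\HH-F$. The first step is to show that $S$ contains a vertex $x$ incident to a cut hyperedge $e\in F$ and having a neighbour $y\in e\setminus S$. For such a pair we have $\kappa_{1/2}(x,y)\ge 0$ by \eqref{eq:half-to-lly}. This inequality gives a transport plan moving $\mu_x^{1/2}$ to $\mu_y^{1/2}$ at cost at most $1$. We evaluate it against the $1$-Lipschitz test function $f=\operatorname{dist}(\cdot,V\setminus S)$, or a truncated version of it, in the spirit of the Liu--Xia combinatorial inequality. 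This should bound the number of hyperedges through $x$ that leave $S$. The bound comes from the $\mu_x$-mass sitting at distance $\ge 1$ inside $S$, which must be matched by mass of $\mu_y$ that does not lie deep outside $S$.

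In detail, $\mu^{1/2}_x$ puts mass $1/2$ on $x$ and mass $\frac{1}{2(r-1)d(x)}$ on each $\GG$-neighbour. Linearity forces $e$ to be the only hyperedge containing both $x$ and $y$. Hence the neighbours of $y$ other than those in $e$ lie in hyperedges that meet $N(x)$ in at most one vertex per hyperedge through $x$. We will write the hypergraph analogue of Liu--Xia's counting inequality,
\[
\sum_{z}f(z)\mu_x^{1/2}(z)-\sum_z f(z)\mu_y^{1/2}(z)\le \Wass(\mu_x^{1/2},\mu_y^{1/2})\le 1,
\]
and estimate both sums in terms of $a$, the number of cut hyperedges at $x$, and $b$, the number at $y$. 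Since $r\ge3$, each hyperedge at $x$ other than $e$ contributes $r-1\ge2$ neighbours, whereas a neighbour $w$ of $y$ can be adjacent to at most one vertex of each such hyperedge. This slack should turn the inequality into $a+b\ge \delta+1$ or similar, forcing $|F|\ge\delta$. That contradicts the choice of $F$. We also need to handle the case where $S$ is a single vertex; there the cut trivially has size $\ge\delta$.

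The main obstacle is extracting the strict gain that removes the ``$-1$'' of \cref{thm:LX}, and doing so without finiteness. In graphs, infinite exceptions exist (ladder-like examples). We expect that the $r-1\ge 2$ vertices per hyperedge, combined with linearity, make every such configuration strictly suboptimal. The plan is to first try choosing $(x,y)$ on a cut hyperedge $e$ with both sides of $e$ nonempty, and to exploit the vertices of $e$ other than $x,y$: in $\GG$ they are common neighbours of $x$ and $y$ that lie on different sides of the cut. If that fails, we will choose $S$ to be minimal among sides of minimum cuts, to mimic the finite-case argument.
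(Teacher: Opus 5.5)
Your curvature input is reasonable: pass to $[\HH]_2$, use $\kappa_{1/2}^{\HH}(x,y)\ge 0$, and test against a distance-to-the-cut function at a cross pair. The gap is that the step meant to yield $a+b\ge\delta+1$ fails for an arbitrary cross pair, and you give no rule for choosing the pair. Write $N(\cdot)$ for neighbourhoods in $[\HH]_2$, and $T\subseteq S$ for the vertices of $S$ lying on cut hyperedges. With $f=\min\{\operatorname{dist}(\cdot,V\setminus S),2\}$, the bound $\Wass(\mu_x^{1/2},\mu_y^{1/2})\le 1$ reads exactly
\[
\frac{2|N(x)\setminus S|+|N(x)\cap T|}{(r-1)d_x^{\HH}}+\frac{|N(y)\cap S|}{(r-1)d_y^{\HH}}\ \ge\ 1 .
\]
Here $|N(x)\setminus S|\le (r-1)a$ and $|N(y)\cap S|\le(r-1)b$. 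However, $|N(x)\cap T|$ is not governed by $a$ or $b$. It counts neighbours of $x$ lying on \emph{other} cut hyperedges, and each cut hyperedge avoiding $x$ can supply up to $r-1$ of them, one in each of $r-1$ different hyperedges through $x$. For instance, take $|F|=\delta-1$, $d_x^{\HH}=\delta$, $a=1$, and $x$ alone on its side of $e$. Then the $x$-terms alone can already sum to $1$, so the inequality carries no information.

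This is why the paper, following Liu--Xia, uses an estimate in terms of the global quantities $|V_0|$ and $|S_{xy}^{\HH_0}|$ (\cref{thm:estimate-kLLY-mincut}). It then applies the combinatorial inequality \cref{thm:comb-main}, whose whole point is to \emph{select} a cross pair with $|S_{xy}^{\HH_0}|<(m+1)(r-1)-|V_0|/2-1$; substituting gives $m+1-\delta(\HH)>0$. The strict gain you are looking for comes from $r\ge 3$ inside that inequality:
\begin{itemize}
\item If every cut hyperedge has a side of size $1$, the cross-neighbour graph has $m(r-1)$ edges, and Liu--Xia's graph bound $m(r-1)-|V_0|/2$ beats the target by $r-2$.
\item Otherwise, one sums over the $p\ge 2$ pairs $(u_i,v_i)$ inside a cut hyperedge with at least two vertices on each side. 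One then uses $|V_0|\le m(r-1)+1$ (componentwise), which comes from the circuit rank of the incidence graph.
\end{itemize}
All of this is applied to the finite hypergraph $\HH_0$, so finiteness of $\HH$ never enters. Your worry about infinite exceptions, and the fallback of a minimal side $S$, are beside the point. Also, the linearity fact you want to exploit is false. A vertex off a hyperedge $g$ may be adjacent to every vertex of $g$ via different hyperedges, e.g.\ a point off a line of the Fano plane. Linearity only says that each single hyperedge other than $g$ meets $g$ at most once.

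The second gap is the bipartite-star configuration, which \cref{thm:comb-main} must exclude. Here all cut hyperedges pass through one vertex $x$ with $X_0=\{x\}$, while the side $X$ containing $x$ is larger than $\{x\}$. Your remark about $|S|=1$ covers only the trivial subcase $X=\{x\}$. In this case minimality of the cut is used in an essential way. Moving $x$ across gives a cut of size $d_x^{\HH}-m$, hence $d_x^{\HH}\ge 2m$. The star estimate \eqref{eq:estimate-kLLY-star}, with $|S_{xy}^{\HH_0}|=m(r-1)-1$ and $\alpha_{x,y}^{\HH}\le m(r-1)-1$, then gives $(r-1)d_y^{\HH}\le m(r-1)+1$. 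Since $r-1\ge 2$, this means $d_y^{\HH}\le m$.

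Your one-sided $f$ is too weak even here. Take $S=X$ and $d_x^{\HH}=2m$; your inequality becomes $1+\frac{1}{(r-1)d_y^{\HH}}\ge 1$, which is vacuous. The reason is that $f$ vanishes on all of $V\setminus S$ and so ignores the mass of $\mu_y$ lying away from the cut. A two-sided function equal to $2,1,0,-1$ on $X\setminus X_0$, $X_0$, $Y_0$, $Y\setminus Y_0$ is $1$-Lipschitz. It gives $\kappa_{\LLY}^{\HH}(x,y)\le -1+\frac{m(r-1)+1}{(r-1)d_y^{\HH}}$, which is negative once $d_y^{\HH}\ge m+1$. This is precisely the content of the star estimate.
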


The proof follows the cut-based strategy of Liu and Xia.  A central
ingredient is the following combinatorial inequality.  We recall that \emph{bipartite} hypergraph and \emph{bipartite star} are defined in \cref{sec:Edge cute and edge-connectivity}. Let $\HH=(V,E)$ be a bipartite hypergraph with respect to $V=X\sqcup Y$. For any vertices $(x,y)\in X\times Y$ with $x\sim_{\HH} y$,  define
\[
S_{xy}^{\HH}
:=
\bigl\{
w\in Y\setminus\{y\}: w\sim_{\HH}x
\bigr\}
\cup
\bigl\{
w\in X\setminus\{x\}: w\sim_{\HH}y
\bigr\}.
\]
\begin{theorem}
\label{thm:comb-main}
Let $\HH=(V,E)$ be a finite simple $r$-uniform linear hypergraph
without isolated vertices, where $r\ge 3$.
Suppose that $V=X\sqcup Y$ is a partition into nonempty parts $X$ and $Y$,
every hyperedge meets both $X$ and $Y$, and $\HH$ is not a bipartite star
with respect to this bipartition.
Then
\begin{equation}
\label{eq:comb-main}
 \min_{\substack{(x,y)\in X\times Y\\x\sim_{\HH}y}}
 |S_{xy}^{\HH}|
 <
 (|E|+1)(r-1)-\frac{|V|}{2}-1.
\end{equation}
\end{theorem}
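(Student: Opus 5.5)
The plan is to turn \eqref{eq:comb-main} into a counting problem about a single well-chosen adjacent pair. The first step is an exact formula for $|S_{xy}^{\HH}|$ that uses linearity. For a hyperedge $e$ put $a_e:=|e\cap X|\ge 1$ and $b_e:=|e\cap Y|\ge 1$, so that $a_e+b_e=r$. Since $\HH$ is linear, two distinct hyperedges through $x$ share no vertex other than $x$. Hence the sets $e\cap Y$ with $e\ni x$ are pairwise disjoint, and likewise the sets $e\cap X$ with $e\ni y$. Moreover $y\in N_Y(x)$, $x\in N_X(y)$, and the two pieces of $S_{xy}^{\HH}$ lie in the disjoint sets $Y$ and $X$. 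This gives
\[
|S_{xy}^{\HH}|=\sum_{e\ni x} b_e+\sum_{e\ni y} a_e-2 .
\]
If $e_0$ is the unique hyperedge containing both $x$ and $y$, the contribution of $e_0$ is $a_{e_0}+b_{e_0}=r$. Writing $d(\cdot)$ for the incidence degree, we therefore get
\[
|S_{xy}^{\HH}|= r-2+\sum_{e\ni x,\,e\ne e_0} b_e+\sum_{e\ni y,\,e\ne e_0} a_e .
\]

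The second step bounds the right-hand side of \eqref{eq:comb-main} from below by counting vertices. Every vertex lies in some hyperedge, since there are no isolated vertices. Fix the pair $(x,y)$ and the hyperedge $e_0$. Consider the hyperedges through $x$ or $y$: by linearity they cover exactly $\{x,y\}\cup S^{\HH}_{xy}$ together with the remaining vertices $\bigcup_{e\ni x,\,e\neq e_0}(e\cap X)\cup\bigcup_{e\ni y,\,e\neq e_0}(e\cap Y)\cup (e_0\setminus\{x,y\})$. Every other hyperedge adds at most $r-1$ new vertices, because it must meet some vertex already covered, or else it lies in a separate component and contributes at most $r$. This yields an inequality of the form $|V|\le |\{x,y\}\cup S\cup(\text{rest near }x,y)|+(r-1)(|E|-d(x)-d(y)+1)$, up to a correction for each additional component. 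Substituting it into $(|E|+1)(r-1)-|V|/2-1$ reduces \eqref{eq:comb-main} to a local inequality. That inequality compares $\sum_{e\ni x,\,e\neq e_0}b_e+\sum_{e\ni y,\,e\ne e_0}a_e$, weighted by $3/2$, against $(r-1)(d(x)+d(y)-1)$, weighted by $1/2$, together with the terms $\tfrac12\sum_{e\ni x,\,e\ne e_0}a_e+\tfrac12\sum_{e\ni y,\,e\ne e_0}b_e$.

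The third step chooses the pair $(x,y)$ so that this local inequality holds. For an incident pair, the quantity $\sum_{e\ni x,\,e\neq e_0}(b_e-a_e)$ is small exactly when the other edges through $x$ lie mostly on $x$'s side. So I would pick $(x,y)$ minimizing a suitable potential, for instance $\sum_{e\ni x}b_e+\sum_{e\ni y}a_e$. Alternatively I would average this quantity over all incident pairs $(x,y)\in X\times Y$ in all hyperedges, with weights $1/(a_eb_e)$, and show that the average is small enough. Here $r\ge 3$ enters, since the slack term $\tfrac12(r-1)$ from the $+1$ in $(|E|+1)$ must absorb the constant $r-2$.

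The main obstacle is the equality analysis. Configurations in which every hyperedge passes through one common vertex, with all other vertices on the opposite side, make the count tight. These are precisely the bipartite stars of \cref{sec:Edge cute and edge-connectivity}, and they are excluded by hypothesis. I expect to treat separately the case where some vertex lies in all hyperedges, and to show that if the bipartition is not a star then some hyperedge has at least two vertices on each side, or some pair has $d(x)+d(y)$ strictly smaller than the extremal value. In either case one unit of strictness is gained. Disconnected $\HH$ only increases the slack, so I would first reduce to the connected case, or handle it via the component correction above.
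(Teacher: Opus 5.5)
\textbf{Gap: Step 3, the choice of the pair, is never supplied.} Your Step 1 formula for $|S_{xy}^{\HH}|$ is correct. Everything after it leaves open which pair $(x,y)$ to take, and that choice is the whole content of the theorem.

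Minimizing $\sum_{e\ni x}b_e+\sum_{e\ni y}a_e$ just minimizes $|S_{xy}^{\HH}|+2$, so it restates the claim. The $1/(a_eb_e)$-weighted average fails. Take $c\in X$, disjoint $(r-1)$-sets $Y_1,\dots,Y_k\subseteq Y$, hyperedges $\{c\}\cup Y_i$, and one pendant hyperedge $\{y_1\}\cup Z$ with $y_1\in Y_1$, $Z\subseteq X$, $|Z|=r-1$.
\begin{itemize}
\item This hypergraph is linear and not a bipartite star, with $m=k+1$, $n=(r-1)m+1$, and bound $\frac{k(r-1)}{2}+\frac{3r}{2}-3$.
\item The good pairs are $(z,y_1)$ with $z\in Z$; they have $|S_{zy_1}^{\HH}|=r-1$.
\item Every pair $(c,y)$ has $|S_{cy}^{\HH}|\ge k(r-1)-1$.
\item So your weighted average exceeds $(k-1)(r-1)-1$, which is above the bound once $k\ge 5$.
\end{itemize}

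Step 2 does not give a local inequality either. When there are no overlaps (any linear hypertree), your covering count gives only $|V|\le (r-1)m+1$, and the target becomes $\sum_{e\ni x,e\ne e_0}b_e+\sum_{e\ni y,e\ne e_0}a_e<\frac{(r-1)m-1}{2}$. This still involves the global $m$: the slack comes from hyperedges far from $x,y$.

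Stars are also not the tight cases. A star with $m\ge 2$ hyperedges has $|S_{cy}^{\HH}|=m(r-1)-1$, which exceeds the bound $\frac{m(r-1)}{2}+r-\frac52$ by $\frac{(m-2)(r-1)+1}{2}$. So there is no ``one unit of strictness'' to gain. Near-star configurations like the one above satisfy the inequality only through a peripheral pair.

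\textbf{How the paper fills this in.} Your dichotomy in the last paragraph (some hyperedge with at least two vertices on each side, or not) is the paper's case split, but each case needs a specific argument.
\begin{itemize}
\item \emph{Every hyperedge has a singleton side.} The cross-neighbor graph $B(\HH)$ has $n$ vertices and exactly $m(r-1)$ edges, and $S_{xy}^{\HH}=S_{xy}^{B(\HH)}$. The graph inequality $\min|S_{xy}|\le |E|-|V|/2$ for non-star bipartite graphs \cite[Theorem~1.11]{LiuXia2026} gives $\min|S_{xy}^{\HH}|\le m(r-1)-\frac n2$, with $r-2$ to spare. This external result is what locates the peripheral good pair in examples like the one above.
\item \emph{Some hyperedge $e$ has $p=\min\{|e\cap X|,|e\cap Y|\}\ge 2$.} The paper averages only over $p$ disjoint cross pairs $(u_i,v_i)$ inside $e$. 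Linearity gives $\sum_i(d_{u_i}+d_{v_i}-2)\le m-1$, since every other hyperedge meets $e$ at most once. Hence $\sum_i|S_{u_iv_i}^{\HH}|\le (m-1)(r-1)+p(r-2)$. Combined with $n\le m(r-1)+1$ (the circuit rank of the incidence graph is nonnegative), this rules out all $p$ pairs being bad.
\end{itemize}

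Finally, ``reduce to the connected case'' is incomplete. A disconnected $\HH$ can have every component a bipartite star even though $\HH$ is not one. The paper handles this by a separate count using the component with the fewest hyperedges.
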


For nonlinear hypergraphs, neither \cref{thm:linear-main} nor any
uniformly bounded analogue of it can hold.  The following infinite
example already gives a gap of one.
\begin{example}
\label{ex:infinite-chain}
Let
\[
 V=\{u_i:i\in\ZZ\},\qquad
 E=\bigl\{\{u_{i-1},u_i,u_{i+1}\}:i\in\ZZ\bigr\}.
\]
Then $\HH=(V,E)$ is a connected nonlinear $3$-uniform hypergraph
with nonnegative Lin--Lu--Yau curvature and
\[
   \edgecon(\HH)=2=\delta(\HH)-1.
\]
\end{example}

Our second main result shows that the gap can be prescribed and
made arbitrarily large.
\begin{theorem}
\label{thm:nonlinear-main}
For every integer $r\ge 3$ and every integer $t\ge 2$, there exists
a finite connected simple nonlinear $r$-uniform hypergraph
$\HH_{r,t}$ with positive
Lin--Lu--Yau curvature, such that
\[
   \edgecon(\HH_{r,t})
   =\delta(\HH_{r,t})-t.
\]
\end{theorem}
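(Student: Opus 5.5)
We will prove \cref{thm:nonlinear-main} by an explicit construction: two "dense" blocks glued by few hyperedges, where nonlinearity lets every vertex have very large incidence degree even though few hyperedges cross between the blocks. The basic ingredient is the complete $r$-uniform hypergraph $K^{(r)}_n$ on $n$ vertices. Its incidence degree is $\binom{n-1}{r-1}$, and under the Tian--Zhao walk its curvature is strongly positive, since for $x\sim y$ the measures $\mu_x$ and $\mu_y$ are almost identical. Concretely, we take two disjoint vertex sets $A$ and $B$. Inside $A$ and inside $B$ we put a family of $r$-subsets, and between them we put exactly $m$ "bridge" hyperedges meeting both sides, say each containing $r-1$ vertices of $A$ and one vertex of $B$, or a balanced split. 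We then remove from the blocks some hyperedges through the bridge vertices so that every vertex has the same degree $\delta = m+t$, while the cut between $A$ and $B$ has size $m$. The parameter $m$, which is the number of bridges, is free, so we choose it equal to $\delta-t$. The cleanest version is a vertex-transitive-ish design: we take $A=B=\ZZ_n$ with a circulant family of $r$-sets chosen so that degrees are easy to count.

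The first step is to verify the combinatorics. We check simplicity and connectivity, and we check nonlinearity, which is automatic since blocks of complete type share $r-2\ge1$ vertices. We compute $\delta(\HH_{r,t})$ and show $\edgecon(\HH_{r,t})=m$. The upper bound $\edgecon\le m$ comes from the bridge cut. For the lower bound, any cut separating a proper subset $U$ must either split a block, which costs many hyperedges because the blocks are highly edge-connected (complete-like), or separate $A$ from $B$, which costs at least $m$. Adjusting degrees by deleting a matching-like family of internal hyperedges at bridge vertices gives $\delta-\edgecon=t$ exactly.

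The second step, which is the main obstacle, is positivity of the curvature for every adjacent pair. By \eqref{eq:half-to-lly} it suffices to show $\Wass(\mu^{1/2}_x,\mu^{1/2}_y)<d(x,y)=1$ for all $x\sim_\HH y$. There are three kinds of pairs: pairs inside one block, pairs inside one bridge hyperedge, and pairs of distinct bridge endpoints. For pairs inside a block, $x$ and $y$ share most of their neighbourhoods. We construct an explicit transport plan that leaves the common mass fixed and moves the small symmetric-difference mass at distance at most $2$; since the blocks are large, with $n\gg m,t$, this costs strictly less than $1$. Pairs inside a bridge are the delicate ones, since $y\in B$ sees few vertices of $A$. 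Here we use the idleness $1/2$: the transport sends the mass $\tfrac12$ at $x$ to $y$ and vice versa cheaply, and the mass of $\mu_x$ on $A$ is moved to the neighbours of $y$ along the other bridges. For this we want the bridges to be arranged so that the $A$-endpoints of bridges are pairwise adjacent, which holds inside a complete-like block, so every such move has length at most $2$, with a positive fraction of length $\le1$. We then tune $n$ as a function of $r,t$ (taking $n$ large) so that the total cost is $<1$. If the direct plan is too costly for the bridge pairs, we fall back on the dual: we exhibit a $1$-Lipschitz $f$ and bound $\sum f\,d(\mu_x-\mu_y)$ from above by a quantity below $1$.

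Finally, we handle small parameters, such as $r=3$ and small $t$, by choosing $n$ large enough uniformly, and we conclude that $\HH_{r,t}$ has the required properties.
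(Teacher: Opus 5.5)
There is a genuine gap. You never specify the hypergraph, and the step that carries the theorem is positivity on the pairs straddling the cut. That step rests on a heuristic that points the wrong way.

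Write $\mu_z:=\mu_z^{1/2,\HH}$. Let $A^{\partial}\subseteq A$ and $B^{\partial}\subseteq B$ be the vertices lying in some bridge, and put $A^{\circ}:=A\setminus A^{\partial}$, $B^{\circ}:=B\setminus B^{\partial}$. Let $f$ take the values $2,1,0,-1$ on $A^{\circ},A^{\partial},B^{\partial},B^{\circ}$. Bridges are the only hyperedges meeting both sides, so every path from $A^{\circ}$ to $B$ passes through $A^{\partial}$, and every path from $A$ to $B^{\circ}$ passes through $B^{\partial}$. Hence $f$ is $1$-Lipschitz. Take $x\in A^{\partial}$ and $y\in B^{\partial}$ adjacent, lying in $k_x$ and $k_y$ bridges respectively. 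Kantorovich duality gives
\[
\Wass(\mu_x,\mu_y)\ \ge\ 1+\mu_x(A^{\circ})+\mu_y(B^{\circ})-\mu_x(B)-\mu_y(A),
\qquad
\mu_x(B)+\mu_y(A)\ \le\ \frac{k_x}{2d_x^{\HH}}+\frac{k_y}{2d_y^{\HH}}.
\]
In your regime ($n\gg m$, bridges spread out by a vertex-transitive-type design), the ratios $k_x/d_x^{\HH}$ and $k_y/d_y^{\HH}$ are small. Most block-neighbours of $x$ and $y$ are not bridge vertices, so $\mu_x(A^{\circ})$ and $\mu_y(B^{\circ})$ are close to $\tfrac12$. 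Then $\Wass(\mu_x,\mu_y)$ is close to $2$, so the bridge pairs are negatively curved, and enlarging $n$ makes this worse, not better.

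Your own plan shows the same thing. Moving the mass at $x$ to $y$ already costs about $\tfrac12$. The remaining $\approx\tfrac12$ on $A^{\circ}$ must then reach $\mu_y$'s mass on $B^{\circ}$, at distance at least $3$.

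Positivity forces the opposite regime. The walk from a bridge vertex must stay almost entirely on bridge vertices, so the cut has to be concentrated at vertices a large share of whose hyperedges cross it.

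There are also two smaller slips. Complete-like blocks have degree $\binom{n-1}{r-1}\gg m$, so ``every degree equals $m+t$'' and ``$n\gg m$'' are incompatible unless the blocks are sparse. In that case intra-block pairs no longer share most of their neighbourhoods. Second, nonlinearity needs two hyperedges meeting in $r-1\ge 2$ vertices, not $r-2$.

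The missing idea is to funnel the whole cut through a single low-degree hub, which is what the paper does. It takes complete $r$-uniform blocks on exactly $t(r-1)$ vertices, $A=A_1\sqcup\cdots\sqcup A_t$ and $C=C_1\sqcup\cdots\sqcup C_t$ with $|A_i|=|C_i|=r-1$. It adds one vertex $b$ and the $2t$ hyperedges $\{b\}\cup A_i$, $\{b\}\cup C_i$.

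Then $\delta=d_b=2t$ is attained at the hub, whose hyperedges split $t+t$ across the cut separating $A$ from $C\cup\{b\}$. Also $\edgecon=t$, because each pair inside a block lies in $\binom{t(r-1)-2}{r-2}\ge t$ hyperedges and each side has $t$ hub edges. No degree equalization is needed.

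In the notation above, $A^{\circ}=\varnothing$, $B^{\partial}=\{b\}$, $B^{\circ}=C$, and $\mu_b(A)=\mu_b(C)=\tfrac14$, so the dual bound becomes $1-\mu_x(b)$. It is attained by the plan that fixes all of $\mu_b$ on $A$, moves the excess $\tfrac12-\mu_x(b)$ at $b$ a distance $1$, and moves the mass $\tfrac14$ on $C$ a distance $2$. This gives $\Wass(\mu_b,\mu_x)=1-\mu_x(b)<1$. Pairs inside a block are immediate, because $A\cup\{b\}$ is a clique in the $2$-section and $\mu_x,\mu_y$ overlap at $x$.
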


The uniformity assumption in Theorem~1.3 is also essential.
In fact, linearity alone does not impose any uniformly bounded relation
between the minimum degree and the edge-connectivity, even under strictly
positive Lin--Lu--Yau curvature.

\begin{theorem}\label{thm:linear-nonuniform}
For every integer $t\geq 1$, there exists a finite connected simple linear
hypergraph $\mathcal{L}_t$ with positive Lin--Lu--Yau curvature such that
every hyperedge has cardinality either $2$ or $3$ and
\[
    \lambda(\mathcal{L}_t)
    =
    \delta(\mathcal{L}_t)-t.
\]
In particular, among finite connected simple linear hypergraphs with
positive Lin--Lu--Yau curvature, the gap
\[
    \delta(\mathcal{H})-\lambda(\mathcal{H})
\]
can be arbitrarily large.
\end{theorem}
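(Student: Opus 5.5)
The plan is an explicit construction, followed by verification of the three required properties: the degrees, the edge-connectivity, and the curvature. For the curvature I would use the limit-free formula \eqref{eq:half-to-lly}. It reduces positivity of $\kappa_{\LLY}^{\HH}(x,y)$ to positivity of $\kappa_{1/2}^{\HH}(x,y)$, and a lower bound on that only needs an explicit transport plan between $\mu_x^{1/2}$ and $\mu_y^{1/2}$. No dual Lipschitz functions are required.

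The idea behind the construction is a mechanism with no analogue in graphs. A $3$-edge $\{s,s',u\}$ with $s,s'$ on one side of a cut raises the degree of two vertices on that side but crosses the cut only once. Mixing $2$- and $3$-edges therefore lets the vertices near a cut carry more incidences than there are crossing hyperedges. Linearity is kept by requiring that every pair of vertices lies in at most one hyperedge.

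\textbf{Construction.} I would take two disjoint vertex sets $A$ and $B$, each carrying a highly symmetric, dense, linear hypergraph. My first choice is a complete graph $K_n$ made of $2$-edges, so that inside each side all vertices are adjacent and distances are $1$. I would then add a small family of crossing hyperedges and tune their number in terms of $t$. The crossing family would consist of $3$-edges of the form $\{a,a',b\}$, with $\{a,a'\}$ removed from the clique on $A$ to keep linearity, and symmetrically $\{a,b,b'\}$ on the $B$ side. The parameter $n$ and the number of crossing edges are chosen so that every vertex has the same degree $\delta$ and the number of crossing hyperedges equals $\delta-t$. Removing these hyperedges disconnects $A$ from $B$, which gives $\edgecon(\mathcal{L}_t)\le\delta-t$.

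\textbf{Lower bound on $\edgecon$.} For the reverse inequality I would show that every cut $(S,V\setminus S)$ other than the $A|B$ cut crosses at least $\delta-t$ hyperedges. Suppose $S$ splits one of the two sides, say it contains $k$ vertices of $A$ with $1\le k\le n-1$. Then the clique on $A$ alone contributes at least $k(n-k)-O(1)$ crossing hyperedges (the $O(1)$ accounts for the clique edges deleted for linearity), and this is at least $n-1-O(1)$, which exceeds $\delta-t$. If $S$ splits neither side, it is $A$ or $B$ itself. This case split is routine.

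\textbf{Curvature (the main obstacle).} I would classify the adjacent pairs $x\sim y$ into orbits under the symmetry of the construction. The orbits are: two vertices of the same side, both untouched by crossing edges; the same with one or both lying in crossing edges; and a crossing pair $a\sim b$ (or $a\sim a'$ inside a crossing $3$-edge). For pairs inside one side, most of the mass of $\mu_x^{1/2}$ and $\mu_y^{1/2}$ sits on common neighbours and can stay in place, so positivity should follow from the usual triangle-rich estimate, as for $K_n$. The crossing pairs are the hard case, because $\mu_x$ lives mostly in $A$ and $\mu_y$ mostly in $B$. Here I would transport along the crossing structure. The key is that a $3$-edge $\{a,a',b\}$ gives $b$ mass $1/(2\delta)$ on both $a$ and $a'$, and these lie at distance $1$ from many $A$-vertices. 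I would build the plan explicitly: keep as much mass as possible fixed, match crossing neighbours, and move the remaining bulk mass at cost $1$. I would then check that the total cost is strictly less than $1$. Should the pure clique-plus-triangles design fail this test, because too much bulk mass must travel distance $2$ or more, the first modification to try is to make the crossing $3$-edges cover more vertices. For example, use a linear family in which every vertex of $A$ lies in one crossing $3$-edge, and compensate for the resulting number of crossing edges by raising the within-side degree through $3$-edges of a Steiner triple system, so that $\delta$ grows faster than the cut.
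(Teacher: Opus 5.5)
\textbf{There is a genuine gap: your two-clique construction cannot meet your own parameter constraints, and the crossing-pair curvature, which you leave open, is negative in its natural instances.}

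Linearity forces you to delete the clique edge $\{a,a'\}$ when you add $\{a,a',b\}$. So a crossing triple leaves $d_a,d_{a'}$ unchanged and raises only the degree of its lone vertex $b$ on the other side. Thus $d_v=n-1+s_v$, where $s_v$ is the number of crossing triples in which $v$ is the only vertex from its side, and $\sum_v s_v=c$ is the number of crossing triples.

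Equal degrees $\delta=n-1+k$ with $k\ge1$ then force $c=2nk>\delta$, so the parameters you describe do not exist. If you drop equal degrees and let the $A|B$ cut be the minimum one, as you intend, then $c=\delta-t\le n-1+\frac{c}{2n}-t$ gives $c<n$. Hence $\delta=n-1$ and $c=n-1-t$.

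With so few crossing triples, take for instance all of them through one vertex: $\{a_{2i-1},a_{2i},b\}$ with $b\in B$. This covers the single-triple case and the version of your fallback in which every vertex of $A$ lies in a crossing triple. Since $b$ is the only vertex of $B$ adjacent to $A$, the function $f=\mathbf 1_A-\mathbf 1_{B\setminus\{b\}}$ (with $f(b)=0$) is $1$-Lipschitz. For $a=a_1$ (so $d_a=n-1$, $d_b=n-1+c$) it gives
\[
W_1(\mu_a,\mu_b)\ \ge\ 1-\frac{1}{4(n-1)}+\frac{n-1-c}{2(n-1+c)}\ >\ 1\qquad(c\le n-2),
\]
so $\kappa_{\LLY}(a,b)<0$.

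The bulk mass of $\mu_b$ on $B\setminus\{b\}$, roughly $\frac12$, can enter $A$ at cost $1$ only along cross adjacencies. The $c=n-1-t$ triples supply at most $c<n-1$ disjoint such pairs, since the two cross pairs of a triple share a vertex. Your plan gives no reason why ``the remaining bulk mass'' can move at cost $1$. (Also, $b$ gives $a$ and $a'$ mass $\frac{1}{4d_b}$ each, not $\frac{1}{2\delta}$.)

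The fallback goes the wrong way. In a linear hypergraph the hyperedges through $v$ meet $S\setminus\{v\}$ in disjoint sets. So the within-side degree is at most $|S|-1$, and it is maximized by $2$-edges; Steiner triples lower it rather than raise it.

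\textbf{The missing idea is the one behind the paper's $\mathcal L_t$.} Put the small cut between a dense cluster and a hub vertex that lies in no clique and whose incidences are split evenly between two clusters. Take $A,C$ of size $2s$ ($s=t+1$), each carrying $K_{2,\dots,2}$, which is your clique with the pairs $A_i$ deleted. Let $b$ lie only in the $2s$ triples $\{b\}\cup A_i$ and $\{b\}\cup C_i$.

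Then $A\cup\{b\}$ is pairwise adjacent and $\delta=2s-1$. Also $\lambda=|F_A|=s$, with the lower bound coming from the $(2s-2)$-edge-connectivity of $K_{2,\dots,2}$.

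At the critical pair $(x,b)$, use the plan that keeps $\mu_b|_A$ in place, moves the excess at $b$ one step into $A$, and moves $\mu_b(C)=\frac14$ two steps. It costs $1-\frac{1}{4(2s-1)}<1$. This works only because the hub has exactly $\frac14$ of its mass on the far side. A hub inside a clique, as in your design, carries $\frac{n-1}{2(n-1+s_b)}>\frac14$ on its own side, and the same accounting reproduces the bound above, which exceeds $1$.

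Your reduction via $\kappa_{\LLY}=2\kappa_{1/2}$ and your cut-counting for $\lambda$ match the paper; it is the construction that must change.
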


Thus the uniformity and linearity assumptions in Theorem~1.3 are
independently essential: Theorem~1.6 shows that linearity cannot be
removed while uniformity is retained, whereas
Theorem~\ref{thm:linear-nonuniform} shows that uniformity cannot be
removed even if linearity is retained.

The paper is organized as follows.  In \cref{sec:prelim} we review
basic hypergraph terminology, optimal transport, and the
Tian--Zhao curvature.  In \cref{sec:comb} we formulate the
combinatorial inequality underlying the linear case.
\Cref{sec:linear} explains the cut-curvature argument for
\cref{thm:linear-main}.  Finally, \cref{sec:5} gives two complementary families of counterexamples.
The first shows that linearity cannot be omitted from \cref{thm:linear-main},
even in the uniform setting, while the second shows that uniformity
cannot be omitted even when linearity is retained.

\section{Preliminaries}
\label{sec:prelim}

Throughout the paper, a hypergraph is a pair $\HH=(V,E)$, where
$V$ is a vertex set and $E$ is a family of finite subsets of $V$. For a set $U$, we write $|U|$ for its cardinality. We assume $|e|\ge 2$ for every $e\in E$.  The hypergraph is
\emph{simple} if $E$ is an antichain under inclusion (i.e., for distinct
$e,f\in E$, neither $e\subseteq f$ nor $f\subseteq e$).  It is
\emph{locally finite} if every vertex belongs to finitely many
hyperedges, and it is \emph{finite} if both $V$ and $E$ are finite. Unless otherwise stated, all hypergraphs in this paper are simple and locally finite.

Two distinct vertices $x,y\in V$ are \emph{adjacent}, written
$x\sim_{\HH}y$, if there exists $e\in E$ with $\{x,y\}\subseteq e$. We call $y$ a \emph{neighbor} of $x$ in $\HH$, and denote by $\Gamma(x)$ the set of all neighbors of $x$.
The incidence degree of $x$ is
\[
   d_x^{\HH}:=\#\{e\in E:x\in e\},
\]
and the minimum incidence degree is
\[
   \delta(\HH):=\min_{x\in V}d_x^{\HH}.
\]
A vertex is \emph{isolated} if its incidence degree is zero, and a
\emph{leaf} if its incidence degree equals $1$.

The hypergraph is \emph{$r$-uniform} if $|e|=r$ for every $e\in E$.
It is \emph{linear} if
\[
   |e\cap f|\le 1
   \qquad\text{for all distinct }e,f\in E.
\]
Otherwise it is nonlinear.

A hyperpath from $u$ to $v$ is a finite sequence of hyperedges
$(e_1,\dots,e_\ell)$ such that
\[
 u\in e_1,\qquad v\in e_\ell,\qquad
 e_i\cap e_{i+1}\ne\varnothing
 \quad(1\le i<\ell).
\]
The hyperpath distance is
\[
 d_{\HH}(u,v):=
 \min\{\ell:\text{there is a hyperpath of length $\ell$ from $u$ to $v$}\}.
\]
We put $d_{\HH}(u,u)=0$.  A hypergraph is connected if
$d_{\HH}(u,v)<\infty$ for all $u,v\in V$.

The \emph{$2$-section} $[\HH]_2$ of $\HH$ is the simple graph with vertex set
$V$ in which $x$ and $y$ are adjacent precisely when
$x\sim_{\HH}y$. In $[\HH]_2$, we denote the degree, adjacency relation, and graph distance by $d^{[\HH]_2}_x$, $x\sim_{[\HH]_2} y$, and $d_{[\HH]_2}(x,y)$, respectively. Note that the graph distance in $[\HH]_2$ equals
$d_{\HH}$.  If $\HH$ is $r$-uniform and linear, then
\begin{equation}
\label{eq:2section-degree}
   d_x^{[\HH]_2}=(r-1)d_x^{\HH}.
\end{equation}

The \emph{incidence graph} $\mathcal{B}(\HH)$ of
$\HH$ is the bipartite graph
\[
\mathcal{B}(\HH) = (V \sqcup E,\; I),
\]
with bipartition $V$ and $E$, and edge set
\[
I = \{\, (v,e) \in V\times E : v\in e \,\}.
\]
Thus two nodes of $\mathcal{B}(\HH)$ are adjacent exactly when
one is a vertex of $\HH$ and the other is a hyperedge containing it. For any finite graph $G$, the \emph{circuit rank} (also called \emph{cyclomatic number},
\emph{cycle rank} or \emph{nullity}) of $G$ is the dimension of its
$\mathbb{Z}_2$-cycle space:
\[
\beta(G):=\dim_{\mathbb{Z}_2}\mathcal{C}(G),
\]
where $\mathcal{C}(G)\subseteq \mathbb{Z}_2^{F}$ consists of all edge
subsets inducing even degree at every vertex of $G$. Equivalently,
$\beta(G)$ is the minimum number of edges whose removal makes $G$
acyclic. In particular, we have the following \emph{circuit rank formula}:
\begin{equation}\label{eq:circuit_rank_formula}
\beta(G)=m-n+c(G),
\end{equation}
where $c(G)$ is the number of connected components of $G$; see \cite{Diestel2017}. Note that if $\HH$ is $r$-uniform and connected, then the incidence graph $\mathcal{B}(\HH)$ is connected, and has
$|V|+|E|$ vertices and $r|E|$ edges. 
Hence, by circuit rank formula \eqref{eq:circuit_rank_formula}, we have \begin{align}\label{eq:circuit_rank}
\beta\bigl(\mathcal{B}(\HH)\bigr)
& = r|E|-(|V|+|E|)+c(\mathcal{B}(\HH))\\
& = r|E|-(|V|+|E|)+1.
\nonumber
\end{align}

\subsection{Edge cute and edge-connectivity}\label{sec:Edge cute and edge-connectivity}

Let $\varnothing\ne X\subsetneq V$ and put $Y:=V\setminus X$.
The \emph{edge boundary} of $X$ (equivalently, of the bipartition
$V=X\sqcup Y$) is
\begin{equation}
\label{eq:edge-boundary}
 \partial_{\HH}(X)
 :=\{e\in E:e\cap X\ne\varnothing
                 \text{ and }e\cap Y\ne\varnothing\}.
\end{equation}
Clearly,
\[
   \partial_{\HH}(X)=\partial_{\HH}(Y).
\]
A set $F\subseteq E$ is called an \emph{edge cut} if there exists a
nonempty proper subset $X\subsetneq V$ such that
\[
   F=\partial_{\HH}(X)=\partial_{\HH}(V\setminus X).
\]
Thus an edge cut is exactly the common boundary of the two sides of
a nontrivial vertex bipartition; no redundant hyperedges are
allowed in the definition.  The edge-connectivity of a connected
hypergraph is
\begin{equation}
\label{eq:edge-connectivity-boundary}
   \edgecon(\HH)
   :=\min_{\varnothing\ne X\subsetneq V}
     |\partial_{\HH}(X)|.
\end{equation}

For $F\subseteq E$, write
\[
   \HH-F:=(V,E\setminus F).
\]

If $F=\partial_{\HH}(X)$, then $\HH-F$ contains no hyperedge meeting
both $X$ and $V\setminus X$, and hence is disconnected.  Conversely,
if a set $R\subseteq E$ satisfies that $\HH-R$ is disconnected and
$X$ is the union of one or more, but not all, components of
$\HH-R$, then
\[
   \partial_{\HH}(X)\subseteq R.
\]
Equality holds whenever $R$ is inclusion-minimal among the sets of
hyperedges whose deletion disconnects $\HH$.  This observation may
be used to pass between boundary language and deletion arguments,
but the definition throughout this paper is
\cref{eq:edge-boundary,eq:edge-connectivity-boundary}.

As usual,
\begin{equation}
\label{eq:trivial-upper}
   \edgecon(\HH)\le \delta(\HH),
\end{equation}
because, for every vertex $x$, we have
\[
   \partial_{\HH}(\{x\})=\{e\in E:x\in e\},
\]
whose cardinality is $d_x^{\HH}$.

For any $F\subseteq E$, the \emph{edge-induced subhypergraph} of $F$ is
\[
   \HH_F:=\bigl(V(F),F\bigr),
   \qquad
   V(F):=\bigcup_{e\in F}e.
\]
If $F=\{e\}$, we abbreviate $\HH_F$ as $\HH_e$.

We call a hypergraph $\GG=(U,\EE)$ \emph{bipartite with respect to}
$U=X\sqcup Y$ if every hyperedge meets both $X$ and $Y$. Let $F=\partial_{\HH}(X)$ be an edge cut and put $Y=V\setminus X$. Thus $\HH_F$ is bipartite with respect to
$(X\cap V(F))\sqcup (Y\cap V(F))$. This is a
weak bipartiteness condition: a hyperedge may contain more than one
vertex from either side.  Such a hypergraph is a \emph{bipartite
star} if, after possibly interchanging $X$ and $Y$, there is a
vertex $x_0\in X$ such that
\[
   X=\{x_0\}
   \qquad\text{and}\qquad
   x_0\in e\quad\text{for every }e\in\EE.
\]
We call such $x_0$ the center of the bipartite star.

Let $\HH=(V,E)$ be bipartite with respect to
$V=X\sqcup Y$.  Define its \emph{cross-neighbor graph}
$B(\HH)$ to be the bipartite graph with parts $X,Y$ and edge set
\[
 E(B(\HH))
 :=
 \{x\sim_{B(\HH)} y:x\in X,\ y\in Y,\ x\sim_{\HH}y\}.
\]
For any $(x,y)\in X\times Y$ with $x\sim_{B(\HH)} y$, we have
\begin{equation}
\label{eq:sxy-crossgraph}
 |S_{xy}^{B(\HH)}|=d_x^{B(\HH)}+d_y^{B(\HH)}-2,
\end{equation}
where 
\[
S_{xy}^{B(\HH)}:=\{w\in V(B(\HH))\setminus\{x,y\}: w\sim_{B(\HH)} x\ \text{or}\ w\sim_{B(\HH)} y\}.
\]
Moreover,
\begin{equation}
\label{eq:S_xyHH_S_xyB(HH)}
 S_{xy}^{\HH}=S_{xy}^{B(\HH)}.
\end{equation}

\subsection{Wasserstein distance and hypergraph curvature}

Let $\mu$ and $\nu$ be finitely supported probability measures on
$V$.  A transport plan from $\mu$ to $\nu$ is a map
$\pi:V\times V\to[0,1]$ satisfying
\[
 \sum_{v\in V}\pi(u,v)=\mu(u),\qquad
 \sum_{u\in V}\pi(u,v)=\nu(v).
\]
The Wasserstein distance is
\begin{equation}
\label{eq:wasserstein}
 \Wass^{\HH}(\mu,\nu)
 :=
 \inf_{\pi}
 \sum_{u,v\in V}d_{\HH}(u,v)\pi(u,v).
\end{equation}
By Kantorovich duality \cite{Villani2003},
\begin{equation}
\label{eq:kantorovich}
 \Wass^{\HH}(\mu,\nu)
 =
 \sup_{f\in 1\text{-}\Lip(V)}
 \sum_{z\in V}f(z)(\mu(z)-\nu(z)),
\end{equation}
where $f$ is $1$-Lipschitz when
$|f(u)-f(v)|\le d_{\HH}(u,v)$ for all $u,v\in V$.

Following Tian and Zhao \cite{TianZhao2025}, for
$\alpha\in[0,1]$ and $x\in V$ define
\begin{equation}
\label{eq:lazy-measure}
 \mu_x^{\alpha,\HH}(z)
 =
 \begin{cases}
 \alpha,
   &z=x,\\[2mm]
 \displaystyle
 \frac{1-\alpha}{d_x^{\HH}}
 \sum_{\substack{e\in E\\e\supseteq\{x,z\}}}
 \frac{1}{|e|-1},
   &z\sim_{\HH}x,\\[4mm]
 0,
   &\text{otherwise}.
 \end{cases}
\end{equation}
The $\alpha$-Ollivier--Ricci curvature of distinct vertices $x,y$
is
\begin{equation}
\label{eq:alpha-curvature}
 \kappa_\alpha^{\HH}(x,y)
 :=
 1-\frac{\Wass^{\HH}(\mu_x^{\alpha,\HH},\mu_y^{\alpha,\HH})}
          {d_{\HH}(x,y)}.
\end{equation}
The Lin--Lu--Yau curvature is
\begin{equation}
\label{eq:lly-definition}
 \kappa_{\LLY}^{\HH}(x,y)
 :=
 \lim_{\alpha\uparrow 1}
 \frac{\kappa_\alpha^{\HH}(x,y)}{1-\alpha}.
\end{equation}
We say a locally finite hypergraph have positive (resp., nonnegative) Lin--Lu--Yau curvature if $\kappa_{\LLY}^{\HH}(x,y)$ is positive for any $x,y\in V$ with $x\sim_{\HH} y$ (resp., nonnegative). 

We shall repeatedly use the following result.
\begin{theorem}[Xia {\cite[Theorem 1.4 and Corollary 1.5]{Xia2026}}]
\label{thm:xia-limit-free}
Let $\HH$ be a locally finite simple hypergraph and let
$x\sim_{\HH}y$.  Then
$\alpha\mapsto\kappa_\alpha^{\HH}(x,y)$ is concave and piecewise
affine on $[0,1]$ with at most three affine pieces.  Moreover, it is affine
on $[1/2,1]$, and consequently
\[
 \kappa_{\LLY}^{\HH}(x,y)
 =
 \frac{\kappa_\alpha^{\HH}(x,y)}{1-\alpha},
 \qquad \alpha\in[1/2,1).
\]
In particular,
\[
   \kappa_{\LLY}^{\HH}(x,y)=2\kappa_{1/2}^{\HH}(x,y).
\]
\end{theorem}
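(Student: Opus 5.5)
The plan is to follow the Bourne--Cushing--Liu--Muench--Peyerimhoff argument for graphs, replacing the simple random walk by the Tian--Zhao walk. Fix adjacent $x\sim_{\HH}y$, so $d_{\HH}(x,y)=1$. Write
\[
 \mu_x^{\alpha,\HH}=\alpha\,\delta_x+(1-\alpha)\,m_x ,
 \qquad
 m_x:=\mu_x^{0,\HH},
\]
and similarly for $y$. Let $W(\alpha):=\Wass^{\HH}(\mu_x^{\alpha,\HH},\mu_y^{\alpha,\HH})$, so that $\kappa_\alpha^{\HH}(x,y)=1-W(\alpha)$. By Kantorovich duality \eqref{eq:kantorovich},
\[
 W(\alpha)=\sup_{f}\Bigl[\alpha\bigl(f(x)-f(y)\bigr)+(1-\alpha)A_f\Bigr],
 \qquad
 A_f:=\sum_z f(z)\bigl(m_x(z)-m_y(z)\bigr).
\]
For each fixed $f$ the bracket is affine in $\alpha$. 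Hence $W$ is convex and $\kappa_\alpha^{\HH}(x,y)$ is concave.

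\textbf{Finitely many competitors.} All measures involved are supported in the finite set $\{x,y\}\cup\Gamma(x)\cup\Gamma(y)$, by local finiteness. We may normalize $f(y)=0$. The $1$-Lipschitz functions on this finite set, with the metric $d_{\HH}$ (which is integer-valued), form a polytope whose vertices are integer-valued. A linear functional attains its supremum at a vertex, so the supremum is a maximum over finitely many integer-valued $f$. Thus $W$ is a maximum of finitely many affine functions, hence piecewise affine.

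\textbf{At most three pieces.} For such $f$ we have $c:=f(x)-f(y)\in\{-1,0,1\}$. Group the competitors according to $c$. Inside one group the slope part $\alpha c$ is common, so the maximum over the group is the single affine function $\alpha c+(1-\alpha)\max A_f$. Therefore $W$ is the maximum of at most three affine functions, which gives at most three affine pieces.

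\textbf{Affine on $[1/2,1]$ (the main obstacle).} It suffices to show that for $\alpha\ge1/2$ the maximum is attained in the group $c=1$. Concretely, for each competitor $f$ with $c<1$ we want a $1$-Lipschitz $g$ with $g(x)-g(y)=1$ and
\[
 A_f-A_g\le 1-c,
\]
because then $(1-\alpha)(A_f-A_g)\le\alpha(1-c)$ for all $\alpha\ge1/2$.

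I would first try the transport (primal) side instead of modifying $f$ directly. The aim is to show that $W(\alpha)-\alpha$ is nonincreasing at rate exactly matching slope $1$ for $\alpha\ge1/2$, using the decisive mass estimate
\[
 m_x(y)=\frac{1}{d_x^{\HH}}\sum_{e\supseteq\{x,y\}}\frac{1}{|e|-1}\le 1 ,
\]
and likewise $m_y(x)\le1$. Consequently $(1-\alpha)m_x(y)\le\alpha$ whenever $\alpha\ge1/2$. This means that at $\alpha\ge1/2$ the idle mass $\alpha$ at $x$ dominates the mass $\mu_x^{\alpha,\HH}$ places at $y$, and symmetrically at $y$.

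Using this, take an optimal plan at $\alpha=1/2$. Increasing $\alpha$ by $\varepsilon$ moves mass $\varepsilon$ onto $x$ and $y$ from neighbours, proportionally to $m_x$ and $m_y$. The plan can then be adjusted by routing this mass through the pair $(x,y)$ at cost exactly $1$ per unit. This yields an upper bound for $W$ that is affine with the slope of the $c=1$ branch, namely $W(\alpha)\le W(1/2)+(\alpha-\tfrac12)(1-A^*)$. The matching lower bound comes from the dual $c=1$ competitor that is optimal at $\alpha=1/2$, which must be shown to exist using the same mass inequality. Equality of the upper and lower bounds gives affineness on $[1/2,1]$.

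\textbf{Conclusion.} Since $\kappa_1^{\HH}(x,y)=0$, affineness on $[1/2,1]$ gives $\kappa_\alpha^{\HH}(x,y)=(1-\alpha)K$ for $\alpha\in[1/2,1]$ and some constant $K$. The limit in \eqref{eq:lly-definition} is then exactly $K=\kappa_\alpha^{\HH}(x,y)/(1-\alpha)$, and taking $\alpha=1/2$ gives $\kappa_{\LLY}^{\HH}(x,y)=2\kappa_{1/2}^{\HH}(x,y)$.
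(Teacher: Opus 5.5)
Your reduction to finitely many integer-valued potentials and the grouping by $c=f(x)-f(y)\in\{-1,0,1\}$ correctly give concavity and at most three affine pieces. The gap is in the step you flag as the main obstacle. Put $A^*:=\max\{A_f: f(x)-f(y)=1\}$. Every branch with $c<1$ lies strictly below the $c=1$ branch at $\alpha=1$. Hence affineness on $[1/2,1]$ is \emph{equivalent} to $W(1/2)=\tfrac12(1+A^*)$, i.e.\ to the existence of an optimal potential at $\alpha=1/2$ with $f(x)-f(y)=1$. That is precisely the statement you leave as ``must be shown to exist''.

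The primal part does not fill this gap. The bound $W(\alpha)\le W(1/2)+(\alpha-\tfrac12)(1-A^*)$ holds unconditionally: convexity and $W(1)=1$ give the chord bound with slope $2(1-W(1/2))$, which is at most $1-A^*$ because $W(1/2)\ge\tfrac12(1+A^*)$. So the routing argument carries no information. It is also not made precise, since the savings from removing $\varepsilon m_x$ from the source and $\varepsilon m_y$ from the target depend on the structure of the plan. Nor is $(1-\alpha)m_x(y)\le\alpha$ the relevant inequality. It concerns only the vertex $y$ and is trivial because $m_x$ is a probability measure. What must be controlled is the mass of the sets on which a potential is tight.

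The criterion you wrote down ($A_f-A_g\le 1-c$) is the right one, and it can be verified on the dual side by modifying $f$ directly. Let $f$ be integer-valued and $1$-Lipschitz with $a:=f(x)\le b:=f(y)$. Set $g_1:=\min\{f,\,b-1+d_{\HH}(y,\cdot)\}$ and $g_2:=\max\{f,\,a+1-d_{\HH}(x,\cdot)\}$. Both are integer-valued and $1$-Lipschitz with $g_i(x)-g_i(y)=f(x)-f(y)+1$. Moreover $g_1=f-\mathbf{1}_T$ and $g_2=f+\mathbf{1}_{T'}$, where $T=\{z:f(z)=b+d_{\HH}(y,z)\}$ contains $y$ but not $x$, and $T'=\{z:f(z)=a-d_{\HH}(x,z)\}$ contains $x$ but not $y$. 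With $\Phi_\alpha(h):=\sum_z h(z)\bigl(\mu_x^{\alpha,\HH}(z)-\mu_y^{\alpha,\HH}(z)\bigr)$ one computes
\[
\begin{aligned}
\Phi_\alpha(g_1)-\Phi_\alpha(f)&=\alpha+(1-\alpha)\bigl(m_y(T)-m_x(T)\bigr),\\
\Phi_\alpha(g_2)-\Phi_\alpha(f)&=\alpha+(1-\alpha)\bigl(m_x(T')-m_y(T')\bigr).
\end{aligned}
\]
The sum of these is at least $2\alpha-(1-\alpha)\bigl(m_x(T)+m_y(T')\bigr)\ge 2\alpha-2(1-\alpha)$, which is $\ge 0$ for $\alpha\ge 1/2$. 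So one of $g_1,g_2$ is at least as good as $f$. Applying this at most twice converts any optimal potential at $\alpha=1/2$ into an optimal one with $f(x)-f(y)=1$, and your conclusion then follows.
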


In the $r$-uniform case, \cref{eq:lazy-measure} becomes
\begin{equation}
\label{eq:uniform-measure}
 \mu_x^{\alpha,\HH}(z)
 =
 \begin{cases}
 \alpha,&z=x,\\[1mm]
 \displaystyle
 \frac{1-\alpha}{(r-1)d_x^{\HH}}
 \sum_{\substack{e\in E\\e\supseteq\{x,z\}}}1,
 &z\sim_{\HH}x,\\[3mm]
 0,&\text{otherwise}.
 \end{cases}
\end{equation}

If $\HH$ is $r$-uniform and linear, every adjacent pair belongs to
a unique hyperedge.  Combining \cref{eq:uniform-measure} and
\cref{eq:2section-degree} gives
\begin{equation}\label{eq:mu_HH_and_[HH]_2}
 \mu_x^{\alpha,\HH}(z)
 =
 \begin{cases}
 \alpha,&z=x,\\[1mm]
 \displaystyle
 \frac{1-\alpha}{d^{[\HH]_2}_x},
 &z\sim_{[\HH]_2}x,\\[2mm]
 0,&\text{otherwise}.
 \end{cases}
\end{equation}
Thus the hypergraph walk agrees exactly with the simple lazy random
walk on $[\HH]_2$, and
\begin{equation}
\label{eq:2section-curvature}
 \kappa_\alpha^{\HH}(x,y)
 =
 \kappa_\alpha^{[\HH]_2}(x,y)
 \qquad(x\sim_{\HH}y),
\end{equation}
where $\kappa_\alpha^{[\HH]_2}(x,y)$ is the $\alpha$-Ollivier--Ricci curvature between $x$ and $y$ in $[\HH]_2$.
\section{A combinatorial inequality for bipartite linear hypergraphs}
\label{sec:comb}
In this section, we prove Theorem \ref{thm:comb-main}.
\begin{proof}[Proof of Theorem \ref{thm:comb-main}]
Let $\HH=(V,E)$ be a finite simple $r$-uniform linear hypergraph
without isolated vertices, where $r\ge 3$.  Suppose that $V=X\sqcup Y$ is a partition into nonempty parts $X$ and $Y$, every hyperedge meets both $X$ and $Y$, and $\HH$ is not a bipartite star with respect to $V=X\sqcup Y$.  For convenience, write
\[
 m:=|E|
 \qquad\text{and}\qquad
 n:=|V|.
\]

We first consider the case in which $\mathcal H$ is connected.

\medskip
\noindent
\textbf{Case 1.}
For any edge
$e\in E$, we have
\[
    \min\{|e\cap X|,|e\cap Y|\}= 1.
\]
We consider the cross-neighbor graph $B(\HH)$ (see the definition in \cref{sec:Edge cute and edge-connectivity}). Since for any hyperedge $e\in E$, the cardinality of edge set of $B(\HH)$ is $r-1$. Hence
\[
|V(B(\HH))|=n,\qquad |E(B(\HH))|=m(r-1).
\]
By \cite[Theorem 1.11]{LiuXia2026} and \cref{eq:S_xyHH_S_xyB(HH)}, we have
\begin{align*}
     \min_{\substack{(x,y)\in X\times Y\\x\sim_{\HH}y}}
 |S_{xy}^{\HH}|&=\min_{\substack{(x,y)\in X\times Y\\x\sim_{B(\HH)}y}}
 |S_{xy}^{B(\HH)}|\leq
 |E(B(\HH))|-\frac {|V(B(\HH))|}2 \\
 &=m(r-1)-\frac n2<(m+1)(r-1)-\frac n2-1.
\end{align*}
Thus \eqref{eq:comb-main} holds in this case.

\medskip
\noindent
\textbf{Case 2.} There exists an edge
$e\in E$ such that
\[
   \min\{|e\cap X|,|e\cap Y|\}\ge 2.
\]
Fix an edge $e\in E$ satisfying
\[
   \min\{|e\cap X|,|e\cap Y|\}\ge 2,
\]
and write
\[
   X_e:=e\cap X,
   \qquad
   Y_e:=e\cap Y.
\]
Let
\[
   X_e=\{u_1,\ldots,u_p\},
   \qquad
   Y_e=\{v_1,\ldots,v_q\},
\]
where, after interchanging $X$ and $Y$ if necessary,
\[
   2\le p\le q.
\]
In particular, $u_i\sim_{\HH}v_i$ for every $1\le i\le p$.

We prove that at least one of the pairs $(u_i,v_i)$ satisfies
\eqref{eq:comb-main} by contradiction. Suppose, to the contrary, that
\begin{equation}
\label{eq:contradiction-lower-S}
 |S_{u_iv_i}^{\HH}|
 \ge
 (m+1)(r-1)-\frac n2-1
 \qquad
 (1\le i\le p).
\end{equation}
Summing \eqref{eq:contradiction-lower-S} over $i$ gives
\begin{equation}
\label{eq:sum-lower-S}
 \sum_{i=1}^{p}|S_{u_iv_i}^{\HH}|
 \ge
 p\left(
     (m+1)(r-1)-\frac n2-1
   \right).
\end{equation}

We next derive an upper bound for the same sum. Since $\HH$ is
linear, $e$ is the unique hyperedge containing both $u_i$ and
$v_i$. The vertices in
\[
   e\setminus\{u_i,v_i\}
\]
contribute $r-2$ elements to $S_{u_iv_i}^{\HH}$. Every hyperedge
through $u_i$ other than $e$ contributes at most $r-1$ further
neighbors of $u_i$, and every hyperedge through $v_i$ other than
$e$ contributes at most $r-1$ further neighbors of $v_i$.
Possible overlaps only decrease the cardinality of the union.
Consequently,
\begin{equation}
\label{eq:individual-upper-S}
 |S_{u_iv_i}^{\HH}|
 \le
 \left(
   d_{u_i}^{\HH}+d_{v_i}^{\HH}-2
 \right)(r-1)+(r-2).
\end{equation}
Note that,
\begin{equation}
\label{eq:sum-degree-excess}
 \sum_{i=1}^{p}
 \left(
   d_{u_i}^{\HH}+d_{v_i}^{\HH}-2
 \right)
 \le m-1.
\end{equation}
Summing \eqref{eq:individual-upper-S} and using
\eqref{eq:sum-degree-excess}, we obtain
\begin{equation}
\label{eq:sum-upper-S}
 \sum_{i=1}^{p}|S_{u_iv_i}^{\HH}|
 \le
 (m-1)(r-1)+p(r-2).
\end{equation}
Combining \eqref{eq:sum-lower-S} and
\eqref{eq:sum-upper-S} yields
\[
 p\left(
     (m+1)(r-1)-\frac n2-1
   \right)
 \le
 (m-1)(r-1)+p(r-2).
\]
Since
\[
   (m+1)(r-1)-\frac n2-1
   =
   m(r-1)-\frac n2+(r-2),
\]
the terms $p(r-2)$ cancel, and therefore
\begin{equation}
\label{eq:key-contradiction-inequality}
 p\left(
     m(r-1)-\frac n2
   \right)
 \le
 (m-1)(r-1).
\end{equation}

Since $\HH$ is connected, its incidence graph $\mathcal{B}(\HH)$ is connected. Hence, by \eqref{eq:circuit_rank} and $\beta(\mathcal{B}(\HH))\geq 0$, we have
\[
   \beta(\mathcal{B}(\HH))=mr-(n+m)+1\geq 0,
\]
or equivalently,
\begin{equation}
\label{eq:connected-vertex-bound}
   n\le m(r-1)+1.
\end{equation}
It follows from \eqref{eq:connected-vertex-bound} that
\[
   m(r-1)-\frac n2
   \ge
   \frac{m(r-1)-1}{2}.
\]
Because $p\ge 2$, the left-hand side of
\eqref{eq:key-contradiction-inequality} satisfies
\[
\begin{split}
 p\left(
     m(r-1)-\frac n2
   \right)
 &\ge
 \frac p2\bigl(m(r-1)-1\bigr)\\
 &\ge
 m(r-1)-1\\
 &>
 m(r-1)-(r-1)\\
 &=
 (m-1)(r-1),
\end{split}
\]
contradicting \eqref{eq:key-contradiction-inequality}.

\medskip
We now consider the disconnected case. Let $c(\HH)=k\geq 2$, and
\[
 \mathcal H_i=(V_i,E_i),
 \qquad 1\le i\le k,
\]
be the connected components of $\mathcal H$. Put
\[
 m_i:=|E_i|,
 \qquad
 n_i:=|V_i|.
\]
Since every component has no isolated vertices,
\[
 m_i r=\sum_{v\in V_i}d_v^{\mathcal H_i}\ge n_i,
\]
and hence,
\begin{equation}
\label{eq:component-positive}
 m_i(r-1)-\frac{n_i}{2}
 \ge
 \frac{m_i(r-2)}{2}>0
 \qquad(1\le i\le k).
\end{equation}

Suppose first that at least one component, say $\mathcal H_1$, is
not a bipartite star with respect to
\[
 V_1=(X\cap V_1)\sqcup(Y\cap V_1).
\]
Applying the connected case to $\mathcal H_1$, we obtain adjacent
vertices $x\in X\cap V_1$ and $y\in Y\cap V_1$ such that
\begin{align*}
 |S_{xy}^{\mathcal H}|
 &=
 |S_{xy}^{\mathcal H_1}|\\
 &<
 (m_1+1)(r-1)-\frac{n_1}{2}-1.
\end{align*}
Using \eqref{eq:component-positive}, we further obtain
\begin{align*}
 |S_{xy}^{\mathcal H}|
 &<
 (m_1+1)(r-1)-\frac{n_1}{2}-1
 +\sum_{i=2}^{k}
 \left(
 m_i(r-1)-\frac{n_i}{2}
 \right)\\
 &=
 (m+1)(r-1)-\frac n2-1.
\end{align*}
This proves the desired inequality.

It remains to consider the case in which every connected component
is a bipartite star. For each $i$, let $c_i$ denote its center.
Linearity implies that the sets
\[
 e\setminus\{c_i\},
 \qquad e\in E_i,
\]
are pairwise disjoint. Therefore
\[
 n_i=1+m_i(r-1).
\]
Choose an index $j$ such that
\[
 m_j=\min_{1\le i\le k}m_i.
\]
For any leaf $y$ of $\mathcal H_j$, we have
\[
 |S_{c_jy}^{\mathcal H}|
 =
 |S_{c_jy}^{\mathcal H_j}|
 =
 m_j(r-1)-1.
\]
Moreover,
\[
 n=\sum_{i=1}^{k}n_i
   =k+m(r-1).
\]
Consequently,
\begin{align*}
 (m+1)(r-1)-\frac n2-1
 &=
 m(r-1)-\frac {k+m(r-1)}2+r-2
 \\
 &=
 \frac {m(r-1)-k}2+r-2\\
 &\ge
 \frac {km_j(r-1)-k}2+r-2\\
 &=\frac k2 |S_{c_jy}^{\mathcal H}|+r-2> |S_{c_jy}^{\mathcal H}|.
\end{align*}
This finish the proof of Theorem \ref{thm:comb-main}.
\end{proof}

\section{The uniform linear case}
\label{sec:linear}
We first extend the curvature estimates from \cite[Theorem 4.1 and 4.2]{LiuXia2026} to uniform linear hypergraphs.
\begin{theorem}[Curvature estimates on an edge cut]
\label{thm:estimate-kLLY-mincut}
Let $\HH=(V,E)$ be a locally finite connected simple
$r$-uniform linear hypergraph, where $r\ge 3$.  Let
$E_0\subseteq E$ be an edge cut, and let
\[
   \HH_0=(V_0,E_0),
   \qquad
   V_0:=\bigcup_{e\in E_0}e,
\]
be the edge-induced subhypergraph on $E_0$.  Choose a nontrivial
partition
\[
   V=X\sqcup Y
\]
such that
\[
   E_0=\partial_{\HH}(X)=\partial_{\HH}(Y),
\]
and put
\[
   X_0:=X\cap V_0,
   \qquad
   Y_0:=Y\cap V_0.
\]
Then, for every $x\in X_0$ and $y\in Y_0$ satisfying
$x\sim_{\HH_0}y$, one has
\begin{equation}
\label{eq:estimate-kLLY-mincut}
\begin{split}
 &(r-1)d_x^{\HH}d_y^{\HH}\,
   \kappa_{\LLY}^{\HH}(x,y)\\
 &\quad\le
 \max\{d_x^{\HH},d_y^{\HH}\}
 \left(
    |V_0|+2|S_{xy}^{\HH_0}|
 \right)
 +2\min\{d_x^{\HH},d_y^{\HH}\}
 -2(r-1)d_x^{\HH}d_y^{\HH}.
\end{split}
\end{equation}

Suppose, in addition, that $\HH_0$ is a bipartite star with
respect to
\[
   V_0=X_0\sqcup Y_0.
\]
Assume, without loss of generality, that its center is
$x\in X_0$, and let $y\in Y_0$ be a leaf.  Then
\begin{equation}
\label{eq:estimate-kLLY-star}
\begin{split}
 &(r-1)d_x^{\HH}d_y^{\HH}\,
   \kappa_{\LLY}^{\HH}(x,y)\\
 &\quad\le
 d_x^{\HH}\left(
       \alpha_{x,y}^{\HH}+2-(r-1)d_y^{\HH}
     \right)
 +d_y^{\HH}\left(
       2|S_{xy}^{\HH_0}|+2-(r-1)d_x^{\HH}
     \right),
\end{split}
\end{equation}
where
\[
   \alpha_{x,y}^{\HH}
   :=
   \bigl|
      \{w\in V:
          w\sim_{\HH}x
          \text{ and }
          w\sim_{\HH}y
       \}
   \bigr|.
\]
\end{theorem}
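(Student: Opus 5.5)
Since $\HH$ is $r$-uniform and linear, \cref{eq:mu_HH_and_[HH]_2} and \cref{eq:2section-curvature} identify the Tian--Zhao walk with the simple lazy random walk on $G:=[\HH]_2$. By \cref{thm:xia-limit-free}, $\kappa_{\LLY}^{\HH}(x,y)=2\kappa_{1/2}^{\HH}(x,y)=2\bigl(1-\Wass(\mu_x^{1/2},\mu_y^{1/2})\bigr)$, with $d^G_x=(r-1)d_x^{\HH}$ by \cref{eq:2section-degree}. So to get an upper bound on curvature it suffices to give a lower bound on $\Wass(\mu_x^{1/2},\mu_y^{1/2})$. By Kantorovich duality \cref{eq:kantorovich}, this amounts to exhibiting one $1$-Lipschitz test function $f$ on $V$. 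Throughout I fix $d_x:=d_x^{\HH}$ and $d_y:=d_y^{\HH}$. I will follow \cite[Theorem 4.1 and 4.2]{LiuXia2026} closely and transport their cut test function to $G$. The point of the transfer is that the edges of $G$ crossing the cut $X\sqcup Y$ are exactly the cross pairs inside hyperedges of $E_0$, all of which lie in $V_0$.

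\textbf{Test function for the general estimate.} Assume $d_x\ge d_y$; the other case is symmetric after swapping roles and replacing $f$ by $-f$. Define $f$ as follows:
\begin{itemize}
\item $f=0$ on $X\setminus V_0$ (far from the cut on $x$'s side);
\item $f=1$ on $X_0\setminus\{x\}$, and also at $x$ itself;
\item $f=2$ on $Y_0$ and $f=3$ on $Y\setminus V_0$, adjusted near $y$: neighbors of $x$ or $y$ across the cut get the intermediate values that Liu--Xia use.
\end{itemize}
Concretely, set $f(x)=1$ and $f(y)=2$, so that $f(y)-f(x)=1$. Then lower $f$ by one unit on $V_0$-vertices not in $S_{xy}^{\HH_0}\cup\{x,y\}$, whichever side they lie on, so that every $G$-edge changes $f$ by at most $1$.

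Lipschitz continuity has to be checked only along $G$-edges. Edges inside $X$ or inside $Y$ stay within a side. The crossing edges all have both endpoints in $V_0$, and $f$ was arranged so that they are fine.

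\textbf{Computing the pairing.} Next compute $\sum_z f(z)\bigl(\mu_y^{1/2}(z)-\mu_x^{1/2}(z)\bigr)$. The mass $\tfrac12$ at $x$ and at $y$ contributes $\tfrac12$. Each neighbor of $y$ outside $V_0$ gains the full value, and similarly each neighbor of $x$ outside $V_0$. The losses come only from neighbors lying in $V_0$:
\begin{itemize}
\item at most $|V_0|$ vertices, each costing at most weight $1/(2d^G)$;
\item plus an extra unit for those in $S_{xy}^{\HH_0}$, giving the factor $2|S_{xy}^{\HH_0}|$;
\item plus the terms at $x$ and $y$ themselves, giving $2\min\{d_x,d_y\}$.
\end{itemize}
To combine them, weight the deficits by $1/(2(r-1)d_x)$ and $1/(2(r-1)d_y)$ and bound each by the larger denominator. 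Multiplying through by $(r-1)d_xd_y$ should then produce exactly \cref{eq:estimate-kLLY-mincut}: $\max\{d_x,d_y\}$ multiplies $|V_0|+2|S_{xy}^{\HH_0}|$, and the term $-2(r-1)d_xd_y$ comes from $\kappa=2-2\Wass$.

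\textbf{Star case.} When $\HH_0$ is a bipartite star with centre $x$, we have $V_0=\{x\}\cup(\text{leaves})$ and $X_0=\{x\}$. Here I use a finer function. On $Y$, let $f$ be the distance from $x$ truncated at $2$. Put $f=0$ at $x$ and $f=-1$ on the neighbors of $x$ in $X$. The only loss then comes from the common neighbors of $x$ and $y$, which are counted by $\alpha_{x,y}^{\HH}$ and weighted by $1/d_x$, together with the leaves in $S_{xy}^{\HH_0}$, weighted by $1/d_y$. This yields \cref{eq:estimate-kLLY-star}.

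\textbf{Main obstacle.} The main difficulty is the Lipschitz verification together with the bookkeeping of the constant. Linearity is used here: each crossing pair lies in a unique hyperedge of $E_0$, so no extra mass is placed on cut vertices. I will check the coefficient counts against the graph case, $r=2$, which recovers Liu--Xia's estimate.
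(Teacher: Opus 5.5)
Your reduction to $G=[\HH]_2$ is correct, and you already have the fact the paper uses: every crossing edge of $G$ is a cross pair inside a hyperedge of $E_0$. So the crossing edges form the edge cut $E(B(\HH_0))$ of $G$, with vertex set $V_0$ and $S^{B(\HH_0)}_{xy}=S^{\HH_0}_{xy}$. The paper stops there. It applies \cite[Theorems 4.1 and 4.2]{LiuXia2026} to the graph $G$ as a black box, substitutes $d^G_z=(r-1)d^{\HH}_z$ and $\alpha^G_{x,y}=\alpha^{\HH}_{x,y}$, and divides by $r-1$.

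You instead rebuild the test functions, and in the general case the rebuild breaks. The one modification you actually specify makes things worse: lowering $f$ by one on the $V_0$-vertices outside $S^{\HH_0}_{xy}\cup\{x,y\}$ destroys the Lipschitz property rather than securing it. A vertex of $Y_0$ not adjacent to $x$ drops to $1$, while any neighbour it has in $Y\setminus V_0$ keeps the value $3$. A vertex $w\in X_0$ not adjacent to $y$ drops to $0$, yet it may share a hyperedge $\{x,u,w\}\in E_0$ with some $u\in S^{\HH_0}_{xy}\cap Y$ of value $2$. The remaining ``intermediate values that Liu--Xia use'' are never specified. The pairing is asserted (``should then produce exactly'') rather than computed. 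So, as written, \eqref{eq:estimate-kLLY-mincut} is not proved.

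The repair is to drop all adjustments. Your unmodified function, $f=0,1,2,3$ on $X\setminus V_0$, $X_0$, $Y_0$, $Y\setminus V_0$ respectively, is already $1$-Lipschitz, since every crossing edge of $G$ lies in $V_0$. Writing $N(\cdot)$ for neighbourhoods in $G$, it gives
\[
\kappa_{\LLY}^{\HH}(x,y)\le -2+\frac{|N(x)\cap X_0|+2|N(x)\cap Y_0|}{d^G_x}+\frac{|N(y)\cap Y_0|+2|N(y)\cap X_0|}{d^G_y}.
\]
Here $|N(x)\cap Y_0|=1+|S^{\HH_0}_{xy}\cap Y|$ and $|N(y)\cap X_0|=1+|S^{\HH_0}_{xy}\cap X|$. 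Also $|N(x)\cap X_0|+|N(y)\cap Y_0|\le |V_0|-2$.

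Now suppose $d_x\ge d_y$. Keep the single term $2/d^G_x$ coming from $y\in N(x)$. Bound every other numerator using the weight $1/d^G_y$ — the smaller degree, not the larger denominator as you wrote. Multiplying by $(r-1)d_xd_y$ gives exactly \eqref{eq:estimate-kLLY-mincut}.

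Your star-case function does work; set, say, $f=-1$ on all of $X\setminus\{x\}$. Its accounting is reversed, however. It yields
\[
\kappa_{\LLY}^{\HH}(x,y)\le -2+\frac{2|S^{\HH_0}_{xy}|+2}{d^G_x}+\frac{\alpha^{\HH}_{x,y}+2}{d^G_y}.
\]
So the leaves carry weight $1/d^G_x$ and the common neighbours carry weight $1/d^G_y$. You also omit the two ``$+2$'' terms, coming from $y\in N(x)$ and $x\in N(y)$, and both are needed to obtain \eqref{eq:estimate-kLLY-star}.
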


\begin{proof}
Let
\[
   G:=[\HH]_2
\]
be the $2$-section of $\HH$.  Since $\HH$ is $r$-uniform and
linear, by \eqref{eq:2section-degree}, for every $z\in V$,
\begin{equation}
\label{eq:degree-two-section-cut-proof}
   d_z^{G}=(r-1)d_z^{\HH}.
\end{equation}
By \cref{eq:2section-curvature} and passing to the high-idleness limit gives
\begin{equation}
\label{eq:LLY-two-section-cut-proof}
   \kappa_{\LLY}^{\HH}(u,v)
   =
   \kappa_{\LLY}^{G}(u,v).
\end{equation}

Note that $\HH_0$ is bipartite with respect to $V_0=X_0\sqcup Y_0$. Let $G_0$ be the cross-neighbor graph $B(\HH_0)$ of $\HH_0$ (see the definition in \cref{sec:Edge cute and edge-connectivity}). Thus $E(G_0)$ is an edge cut of $G$. Moreover, if $\HH_0$ is a bipartite star centered at
$x\in X_0$. Then $X_0=\{x\}$ and $G_0$ is
also a star centered at $x$.

For every $x\in X_0$ and $y\in Y_0$ satisfying
$x\sim_{\HH}y$, we have $x\sim_{G}y$. Applying \cite[Theorem~4.1 and 4.2]{LiuXia2026} to $G$, we obtain
\begin{align*}
 d_x^Gd_y^G\kappa_{\LLY}^{G}(x,y)
 &\le
 \max\{d_x^G,d_y^G\}
 \left(
    |V_0|+2|S_1^{G_0}(xy)|
 \right)\\
 &\quad
 +2\min\{d_x^G,d_y^G\}
 -2d_x^Gd_y^G,
\end{align*}
and
\begin{align*}
 d_x^Gd_y^G\kappa_{\LLY}^{G}(x,y)
 &\le
 d_x^G\bigl(\alpha_{x,y}^{G}+2-d_y^G\bigr)\\
 &\quad
 +d_y^G
 \left(
    2|S_1^{G_0}(xy)|+2-d_x^G
 \right),
\end{align*}
where
\[
   \alpha_{x,y}^{G}
   :=
   \bigl|
      \{w\in V:
          w\sim_{G}x
          \text{ and }
          w\sim_{G}y
       \}
   \bigr|.
\]
Using
\eqref{eq:degree-two-section-cut-proof},
\eqref{eq:LLY-two-section-cut-proof}, and
\eqref{eq:S_xyHH_S_xyB(HH)}, and then dividing by $r-1$,
we obtain \eqref{eq:estimate-kLLY-mincut} and \eqref{eq:estimate-kLLY-star}.

\end{proof}

Now, we give the proof of Theorem~\ref{thm:linear-main}.
\begin{proof}[Proof of Theorem~\ref{thm:linear-main}]
Let $\HH=(V,E)$ be a locally finite connected simple $r$-uniform linear
hypergraph, where $r\ge 3$.  By \eqref{eq:trivial-upper}, we have
\[
   \edgecon(\HH)\le\delta(\HH).
\]
It therefore remains to prove the reverse inequality.

Let $E_0$ be a minimum edge cut of $\HH$:
\[
   E_0=\partial_{\HH}(X)=\partial_{\HH}(Y),
   \qquad
   V=X\sqcup Y.
\]
Set
\[
   m:=|E_0|=\edgecon(\HH)
\]
and let
\[
   \HH_0=(V_0,E_0)
   \qquad
   V_0:=\bigcup_{e\in E_0}e
\]
be the edge-induced subhypergraph on $E_0$.  Since $E_0$ is
finite, $\HH_0$ is finite.  It is also
simple, $r$-uniform, linear, has no isolated vertices, and every
one of its hyperedges meets both
\[
   X_0:=X\cap V_0
   \qquad\text{and}\qquad
   Y_0:=Y\cap V_0.
\]

We distinguish two cases.

\medskip
\noindent
\textbf{Case 1: $\HH_0$ is not a bipartite star.}

By Theorem~\ref{thm:comb-main}, there exist
$x\in X_0$ and $y\in Y_0$ with
$x\sim_{\HH_0}y$ such that
\begin{equation}
\label{eq:comb-used-linear-main}
   |S_{xy}^{\HH_0}|
   <
   (m+1)(r-1)-\frac{|V_0|}{2}-1.
\end{equation}
Assume without loss of generality that
\[
   d_x^{\HH}\ge d_y^{\HH},
\]
By the nonnegativity assumption and
Theorem~\ref{thm:estimate-kLLY-mincut},
\begin{align}
0
&\le
(r-1)d_x^{\HH}d_y^{\HH}\kappa_{\LLY}^{\HH}(x,y)
\notag\\
&\le
d_x^{\HH}
\left(
   |V_0|+2|S_{xy}^{\HH_0}|
\right)
+2d_y^{\HH}
-2(r-1)d_x^{\HH}d_y^{\HH}.
\label{eq:curvature-chain-nonstar}
\end{align}
From \eqref{eq:comb-used-linear-main},
\[
   |V_0|+2|S_{xy}^{\HH_0}|
   <
   2(m+1)(r-1)-2.
\]
Substituting this strict inequality into
\eqref{eq:curvature-chain-nonstar} gives
\begin{align*}
0
&<
2d_x^{\HH}(m+1)(r-1)
-2d_x^{\HH}+2d_y^{\HH}
-2(r-1)d_x^{\HH}d_y^{\HH}\\
&=
2(r-1)d_x^{\HH}(m+1-d_y^{\HH})
+2(d_y^{\HH}-d_x^{\HH})\\
&\le
2(r-1)d_x^{\HH}(m+1-\delta(\HH)).
\end{align*}
Consequently,
\[
   m+1-\delta(\HH)>0.
\]
Since $m$ and $\delta(\HH)$ are integers, we obtain
\[
   \edgecon(\HH)=m\ge\delta(\HH).
\]

\medskip
\noindent
\textbf{Case 2: $\HH_0$ is a bipartite star.}

After interchanging $X$ and $Y$ if necessary, we may assume that
$\HH_0$ has center $x\in X_0$ and
\[
   X_0=\{x\}.
\]
Every hyperedge of $E_0$ contains $x$.  Since $\HH_0$ is linear, any two
of these hyperedges intersect only at $x$.  Therefore
\begin{equation}
\label{eq:star-Y0-size}
   |Y_0|=m(r-1).
\end{equation}
For every $y\in Y_0$,
\[
   d_y^{\HH_0}=1
\]
and
\begin{equation}
\label{eq:S-star-size}
   |S_{xy}^{\HH_0}|
   =
   |Y_0|-1
   =
   m(r-1)-1.
\end{equation}

If $X=\{x\}$, then
\[
   E_0=\{e\in E:x\in e\},
\]
so
\[
   m=d_x^{\HH}\ge\delta(\HH),
\]
and there is nothing to prove.  We may therefore assume that
$X\setminus\{x\}\ne\varnothing$.

Move $x$ from $X$ to $Y$ and consider the nontrivial partition
\[
   V=(X\setminus\{x\})\sqcup(Y\cup\{x\}).
\]
Every hyperedge of $E_0$ becomes entirely contained in $Y\cup\{x\}$,
whereas every hyperedge incident with $x$ but not belonging to $E_0$
becomes a boundary edge.  Hence
\[
 \left|
 \partial_{\HH}(X\setminus\{x\})
 \right|
 =
 d_x^{\HH}-m.
\]
By the minimality of $E_0$,
\[
   m\le d_x^{\HH}-m,
\]
and therefore
\begin{equation}
\label{eq:center-degree-star}
   d_x^{\HH}\ge 2m.
\end{equation}
Since $E_0$ is a minimum edge cut of $\HH$ and $X_0=\{x\}$, for any $y\in Y_0$, we have
\begin{equation}
\label{eq:alpha-star-bound}
   \alpha_{x,y}^{\HH}
   \le |Y_0|-1
   =m(r-1)-1.
\end{equation}

Applying the nonnegativity assumption and the star estimate
\eqref{eq:estimate-kLLY-star}, and then using
\eqref{eq:S-star-size}, \eqref{eq:center-degree-star}, gives
\begin{align*}
0
&\le
(r-1)d_x^{\HH}d_y^{\HH}\kappa_{\LLY}^{\HH}(x,y)\\
&\le
d_x^{\HH}\left(
       \alpha_{x,y}^{\HH}+2-(r-1)d_y^{\HH}
     \right)
+d_y^{\HH}\left(
       2m(r-1)-(r-1)d_x^{\HH}
     \right)\\
&=
d_x^{\HH}\left(
       \alpha_{x,y}^{\HH}+2-(r-1)d_y^{\HH}
     \right)
+(r-1)d_y^{\HH}(2m-d_x^{\HH})\\
&\le
d_x^{\HH}\left(
       \alpha_{x,y}^{\HH}+2-(r-1)d_y^{\HH}
     \right).
\end{align*}
It follows that
\[
   (r-1)d_y^{\HH}\le\alpha_{x,y}^{\HH}+2.
\]
Together with \eqref{eq:alpha-star-bound}, this yields
\[
   (r-1)d_y^{\HH}
   \le
   m(r-1)+1.
\]
Since $r-1\ge 2$,
\[
   d_y^{\HH}
   \le
   m+\frac{1}{r-1}
   <m+1.
\]
As $d_y^{\HH}$ and $m$ are integers, we conclude that
\[
   m\ge d_y^{\HH}\ge\delta(\HH).
\]

Thus, in both cases,
\[
   \edgecon(\HH)=m\ge\delta(\HH).
\]
Combining this with
$\edgecon(\HH)\le\delta(\HH)$ proves
\[
   \edgecon(\HH)=\delta(\HH).
\]
\end{proof}

\section{Counterexamples beyond the uniform linear setting}
\label{sec:5}
Theorem \ref{thm:linear-main} requires two structural assumptions: uniformity and
linearity. In this section we show that both assumptions are essential,
in two complementary senses.

We first retain uniformity and drop linearity. This leads to the family
$\mathcal{H}_{r,t}$ below and proves Theorem~1.6. We then retain
linearity and drop uniformity. A second family $\mathcal{L}_t$ shows
that the gap between the minimum degree and the edge-connectivity can
again be arbitrarily large, even under strictly positive
Lin--Lu--Yau curvature.

\subsection{The nonlinear uniform family}

We first give the following key example for the proof of Theorem \ref{thm:nonlinear-main}:
\begin{example}
\label{ex:5.1}
For every pair of integers
\[
   r\ge 3,\qquad t\ge 2,
\]
we define the hypergraph $\HH_{r,t}$ as follows.

Let $A$ and $C$ be disjoint $t(r-1)$-element sets, let $b$ be a new
vertex, and choose partitions
\[
   A=A_1\sqcup\cdots\sqcup A_t,
   \qquad
   C=C_1\sqcup\cdots\sqcup C_t,
\]
where
\[
   |A_i|=|C_i|=r-1
   \qquad (1\le i\le t).
\]
Define
\begin{equation}
\label{eq:Hrt-vertices}
   V_{r,t}:=A\sqcup C\sqcup\{b\}
\end{equation}
and
\begin{equation}
\label{eq:Hrt-edges}
\begin{split}
   E_{r,t}
   &:=
   \binomset{A}{r}\cup\binomset{C}{r}\\
   &\quad\cup
   \bigl\{\{b\}\cup A_i:1\le i\le t\bigr\}
   \cup
   \bigl\{\{b\}\cup C_i:1\le i\le t\bigr\}.
\end{split}
\end{equation}
Write
\[
   \HH_{r,t}:=(V_{r,t},E_{r,t}).
\]
\end{example}

We are now ready to prove Theorem \ref{thm:nonlinear-main}.
\begin{proof}[Proof of \cref{thm:nonlinear-main}]
It suffices to verify that, for any $r\in\mathbb{Z}_{r\geq 3}$ and $t\in\mathbb{Z}_{t\geq 2}$, the hypergraph $\HH_{r,t}$ constructed in
Example \ref{ex:5.1} has all the asserted properties, which means $\HH_{r,t}$ is a finite connected simple nonlinear $r$-uniform hypergraph with positive Lin--Lu--Yau curvature. 
Let
\begin{equation}
\label{eq:nonlinear-DL}
   D:=\binom{t(r-1)-1}{r-1},
   \qquad
   L:=\binom{t(r-1)-2}{r-2}.
\end{equation}
We shall repeatedly use the identity
\begin{equation}
\label{eq:nonlinear-DL-identity}
   (t(r-1)-1)L=(r-1)D.
\end{equation}

Every hyperedge of $\HH_{r,t}$ has cardinality $r$, and all
hyperedges in \eqref{eq:Hrt-edges} are distinct. Thus
$\HH_{r,t}$ is a finite simple $r$-uniform hypergraph. Since any distinct vertices $u,v\in V_{r,t}$ are connected, $\HH_{r,t}$ is connected. Moreover, since $r\ge 3$, two suitable hyperedges in
$\binomset{A}{r}$ intersect in $r-1\ge 2$ vertices. Therefore
$\HH_{r,t}$ is nonlinear.

We first determine the minimum degree. For every
$x\in A\cup C$, the vertex $x$ belongs to $D$ hyperedges of the
hyperedge set $\binomset{A}{r}\cup\binomset{C}{r}$ and to exactly one
hyperedge containing $b$. Hence
\begin{equation}
\label{eq:nonlinear-degrees}
   d_b^{\HH_{r,t}}=2t,
   \qquad
   d_x^{\HH_{r,t}}=D+1
   \quad (\forall\ x\in A\cup C).
\end{equation}
Since
\[
   D=\binom{t(r-1)-1}{r-1}
   \ge t(r-1)-1
   \ge 2t-1,
\]
we obtain
\begin{equation}
\label{eq:nonlinear-min-degree}
   \delta(\HH_{r,t})=2t.
\end{equation}

We next determine the edge-connectivity. Let
\[
   E_A
   :=
   \bigl\{\{b\}\cup A_i:1\le i\le t\bigr\}.
\]
Deleting all $t$ hyperedges in $E_A$ separates $A$
from $C\cup\{b\}$. Hence
\begin{equation}
\label{eq:nonlinear-edgecon-upper}
   \edgecon(\HH_{r,t})\le t.
\end{equation}

Conversely, let $F\subseteq E_{r,t}$ with $|F|<t$. Since
$r\ge 3$ and $t\ge 2$,
\[
   L=\binom{t(r-1)-2}{r-2}
   \ge t(r-1)-2
   \ge t.
\]
Every two distinct vertices of $A$ lie together in exactly $L$
hyperedges of $\binomset{A}{r}$. Since $|F|<t\le L$, at least
one such hyperedge remains after deleting $F$. Hence the
subhypergraph induced by $A$ remains connected. The same
argument applies to $C$.

Furthermore, each of the two families
\[
   \bigl\{\{b\}\cup A_i:1\le i\le t\bigr\},
   \qquad
   \bigl\{\{b\}\cup C_i:1\le i\le t\bigr\}
\]
contains $t$ hyperedges. Since $|F|<t$, at least one hyperedge
from each family remains. Thus $b$ is still connected to both
$A$ and $C$, and hence $\HH_{r,t}-F$ is connected. Therefore
\[
   \edgecon(\HH_{r,t})\ge t.
\]
Together with \eqref{eq:nonlinear-edgecon-upper}, we obtain
\begin{equation}
\label{eq:nonlinear-edgecon}
   \edgecon(\HH_{r,t})=t
   =\delta(\HH_{r,t})-t.
\end{equation}

It remains to prove the curvature assertions. For any vertices $x\sim_{\HH_{r,t}}y$, by \cref{thm:xia-limit-free}, we have $\kappa_{LLY}^{\HH_{r,t}}(x,y)=2\kappa_{1/2}^{\HH_{r,t}}(x,y)$. It now suffices to show that $\kappa_{1/2}^{\HH_{r,t}}(x,y)>0$. For convenience, write \[ \mu_v:=\mu_v^{1/2,\HH_{r,t}}. \] 
Since $d_{\HH_{r,t}}(x,y)=1$, by \cref{eq:alpha-curvature}, it is enough to prove that \[ \Wass^{\HH_{r,t}}(\mu_x,\mu_y)<1. \] The adjacency relations in $\HH_{r,t}$ are as follows: every two distinct vertices of $A$ are adjacent, every two distinct vertices of $C$ are adjacent, the vertex $b$ is adjacent to every vertex of $A\cup C$, and there is no adjacency between $A$ and $C$. Thus, up to interchanging $A$ and $C$ and interchanging $x$ and $y$, there are only two cases. 

\medskip \noindent \textbf{Case 1: $x,y\in A$.} Both $\mu_x$ and $\mu_y$ are supported on $A\cup\{b\}$. Moreover, any two distinct vertices of $A\cup\{b\}$ are adjacent in $\HH_{r,t}$.  Consequently, 
\[ 
d_{\HH_{r,t}}(u,v) = \begin{cases} 0,&u=v,\\ 1,&u\ne v, \end{cases} \qquad u,v\in E_A. 
\] 
Let $\pi_*$ be an optimal transport plan from $\mu_x$ to $\mu_y$ satisfying 
\[ \pi_*(u,u) = \min\{\mu_x(u),\mu_y(u)\} \qquad (u\in V_{r,t}), 
\]
see \cite[Lemma 4.1]{BourneEtAl2018}. Since both measures are supported on $E_A$, every pair $(u,v)$ for which $\pi_*(u,v)>0$ belongs to $(A\cup\{b\})\times (A\cup\{b\})$. Therefore, 
\[ 
\begin{split} \Wass^{\HH_{r,t}}(\mu_x,\mu_y) &= \sum_{u,v\in V_{r,t}} \pi_*(u,v)d_{\HH_{r,t}}(u,v)\\ &= \sum_{\substack{u,v\in E_A\\u\ne v}} \pi_*(u,v)\\ &= 1-\sum_{u\in E_A}\pi_*(u,u). 
\end{split} 
\]
Since
\[ \pi_*(x,x) = \min\{\mu_x(x),\mu_y(x)\} >0. \] 
It follows that
\[ \begin{split} \Wass^{\HH_{r,t}}(\mu_x,\mu_y) <1. \end{split}
\]
By symmetry, the same conclusion holds when $x,y\in C$.

\medskip \noindent \textbf{Case 2: $x\in A$ and $y=b$.} Suppose that $x\in A_i$. For convenience, write 
\[ 
n=t(r-1). 
\] 
By \eqref{eq:lazy-measure}, we have
\begin{equation}
 \mu_b(z)
 =
 \begin{cases}
 1/2,
   &z=b,\\[2mm]
 \frac 1{4n},
   &z\in A\cup C,\\[4mm]
 0,
   &\text{otherwise};
 \end{cases}
\end{equation}
and
\begin{equation}
 \mu_x(z)
 =
 \begin{cases}
 \frac{1}{2(D+1)(r-1)},
   &z=b,\\[2mm]
 1/2,
   &z=x,\\[2mm]
 \frac {L+1}{2(D+1)(r-1)},
   &z\in A_i\setminus\{x\},\\[4mm]
 \frac {L}{2(D+1)(r-1)},
   &z\in A\setminus A_i,\\[4mm]
 0,
   &\text{otherwise}.
 \end{cases}
\end{equation}
We first observe that
\[ \mu_b(u)<\mu_x(u) \qquad\text{for every }u\in A. \]
Indeed, this is immediate for $u=x$, since
\[ \mu_x(x)=\frac12>\frac{1}{4n}=\mu_b(x). \]
If $u\in A\setminus\{x\}$, then
\[ \mu_x(u) \ge \frac {L}{2(D+1)(r-1)}. \]
By \cref{eq:nonlinear-DL-identity}, we obtain 
\[ \mu_x(u) \geq \frac{1}{2(n-1)}>\frac{1}{4n} = \mu_b(u). \]
By \cite[Lemma 4.1]{BourneEtAl2018}, let $\pi_*$ be an optimal transport plan from $\mu_b$ to $\mu_x$ satisfying 
\[ \pi_*(u,u) = \min\{\mu_b(u),\mu_x(u)\} \qquad (u\in V_{r,t}).
\]
Since $\mu_b(u)<\mu_x(u)$ for every $u\in A$, we have
\[ \pi_*(u,u)=\mu_b(u) \qquad (u\in A). \] 
Thus all the mass of $\mu_b$ supported on $A$ remains fixed and contributes zero transportation cost. At the vertex $b$, since $\mu_x(b)<\mu_b(b)$, 
\[ \pi_*(b,b)=\mu_x(b). \] Moreover, $\mu_x$ is supported on $A\cup\{b\}$, so all the mass leaving $b$ must be transported to vertices of $A$. Since $d_{\HH_{r,t}}(b,v)=1$ for every $v\in A$, it follows that 
\begin{equation} \label{eq:b-transport-cost} \begin{split} \sum_{v\in V_{r,t}} \pi_*(b,v)d_{\HH_{r,t}}(b,v) &= \sum_{v\in A}\pi_*(b,v)\\ &= \mu_b(b)-\pi_*(b,b)\\ &= \mu_b(b)-\mu_x(b). \end{split} 
\end{equation} 
Next consider the mass supported on $C$. Since
\[ \pi_*(b,b)=\mu_x(b), \] 
the entire target mass at $b$ has already been supplied by the mass initially located at $b$. Hence
\[ \pi_*(u,b)=0 \qquad (\forall\ u\in C). \]
Also, $\mu_x(C):=\sum_{w\in C}\mu_x(w)=0$, so no mass can be transported into $C$. Therefore, all mass initially supported on $C$ must be transported to $A$. Since 
\[ d_{\HH_{r,t}}(u,v)=2 \qquad (\forall\ u\in C,\ \forall\ v\in A), \] 
we obtain
\begin{equation} \label{eq:C-transport-cost} \begin{split} \sum_{\substack{u\in C\\v\in V_{r,t}}} \pi_*(u,v)d_{\HH_{r,t}}(u,v) &= 2\sum_{\substack{u\in C\\v\in A}}\pi_*(u,v)\\ &= 2\sum_{u\in C}\mu_b(u)\\ &= 2\mu_b(C).
\end{split} 
\end{equation}
Combining \eqref{eq:b-transport-cost} and \eqref{eq:C-transport-cost}, and recalling that the mass supported on $A$ has zero transportation cost, we obtain
\[ 
\begin{split} \Wass^{\HH_{r,t}}(\mu_b,\mu_x) &= \sum_{u,v\in V_{r,t}} \pi_*(u,v)d_{\HH_{r,t}}(u,v)\\ &= \mu_b(b)-\mu_x(b)+2\mu_b(C)\\ &= \frac12 - \frac{1}{2(D+1)(r-1)} + 2\cdot\frac14\\ &= 1-\frac{1}{2(D+1)(r-1)}\\ &<1.
\end{split} 
\] 
By symmetry, the same conclusion holds when $x\in C$ and $y=b$.
\end{proof}

\subsection{A linear nonuniform family}

We next show that uniformity is equally indispensable in
Theorem \ref{thm:linear-main}. More precisely, even within the class of finite
connected simple linear hypergraphs, positive Lin--Lu--Yau curvature
does not force the edge-connectivity to be close to the minimum
degree.

\begin{example}\label{ex:5.2}
For every integer $t\geq 1$, put
\[
    s:=t+1.
\]
Let $A$ and $C$ be disjoint sets of cardinality $2s$, let $b$ be a
new vertex, and choose partitions
\[
    A=A_1\sqcup\cdots\sqcup A_s,
    \qquad
    C=C_1\sqcup\cdots\sqcup C_s,
\]
where
\[
    |A_i|=|C_i|=2
    \qquad (1\leq i\leq s).
\]
Define
\[
    V_t:=A\sqcup C\sqcup\{b\}.
\]

Let
\[
    E_A
    :=
    \bigl\{
        \{x,y\}:
        x\in A_i,\ y\in A_j,\ 1\leq i<j\leq s
    \bigr\},
\]
and similarly
\[
    E_C
    :=
    \bigl\{
        \{x,y\}:
        x\in C_i,\ y\in C_j,\ 1\leq i<j\leq s
    \bigr\}.
\]
Furthermore, set
\[
    F_A
    :=
    \bigl\{
        \{b\}\cup A_i:1\leq i\leq s
    \bigr\},
    \qquad
    F_C
    :=
    \bigl\{
        \{b\}\cup C_i:1\leq i\leq s
    \bigr\}.
\]
Finally, define
\[
    E_t
    :=
    E_A
    \cup
    E_C
    \cup
    F_A
    \cup
    F_C
\]
and write
\[
    \mathcal{L}_t:=(V_t,E_t).
\]
\end{example}

We now verify the properties of $\mathcal{L}_t$ and thereby prove
Theorem~\ref{thm:linear-nonuniform}.

\begin{proof}[Proof of Theorem~\ref{thm:linear-nonuniform}]
We now establish the claimed properties of the hypergraph
$\mathcal{L}_t$ introduced in Example~\ref{ex:5.2}, for an arbitrary
$t\geq 1$.
Fix $t\geq 1$ and put $s=t+1$. We first observe that $\mathcal{L}_t$ is a finite connected simple
linear hypergraph. Indeed, every hyperedge in
$E_A\cup E_C$ has cardinality $2$, while every
hyperedge in $F_A\cup F_C$ has cardinality $3$.
Two distinct hyperedges belonging to
$E_A\cup E_C$ clearly intersect in at most one
vertex. Any two distinct hyperedges belonging to
$F_A\cup F_C$ intersect exactly in $\{b\}$. 

Finally, every hyperedge in $E_A$ joins vertices belonging
to two distinct sets $A_i$ and $A_j$, whereas the two vertices
different from $b$ in a hyperedge of $F_A$ belong to a
single set $A_k$. Hence a hyperedge in $E_A$ and a
hyperedge in $F_A$ intersect in at most one vertex.
The same argument applies to $C$. Consequently,
\[
    |e\cap f|\leq 1
    \qquad
    \text{for all distinct }e,f\in E_t,
\]
and therefore $\mathcal{L}_t$ is linear. The same observation also
shows that no $2$-element hyperedge is contained in a $3$-element
hyperedge, so $\mathcal{L}_t$ is simple. Since $A$ and $C$ are both connected and $b$ is adjacent to
every vertex of $A\cup C$, the hypergraph $\mathcal L_t$ is connected.

We next determine the minimum degree. The vertex $b$ belongs to the
$2s$ hyperedges in
$F_A\cup F_C$, and hence
\[
    d_b^{\mathcal{L}_t}=2s.
\]
Let $x\in A_i$. For every $j\neq i$, the vertex $x$ forms a
$2$-element hyperedge with each of the two vertices of $A_j$.
Therefore $x$ belongs to $2(s-1)$ hyperedges of $E_A$,
and it belongs additionally to the unique hyperedge
$\{b\}\cup A_i$. Hence
\[
    d_x^{\mathcal{L}_t}=2(s-1)+1=2s-1
    \qquad
    (x\in A).
\]
By symmetry,
\[
    d_x^{\mathcal{L}_t}=2s-1
    \qquad
    (x\in C).
\]
It follows that
\begin{equation}\label{eq:linear-min-degree}
    \delta(\mathcal{L}_t)=2s-1=2t+1.
\end{equation}

We next determine the edge-connectivity. Deleting the $s$ hyperedges in $F_A$ separates $A$ from
$C\cup\{b\}$. Therefore
\begin{equation}\label{eq:linear-lambda-upper}
    \lambda(\mathcal{L}_t)\leq s.
\end{equation}

Conversely, let $F\subseteq E_t$ satisfy $|F|<s$. Consider first the
$2$-uniform subhypergraph on $A$ with edge set $E_A$.
Its underlying graph is the complete $s$-partite graph
\[
    K_{\underbrace{2,\ldots,2}_{s\text{ times}}},
\]
which is $(2s-2)$-edge-connected. Since
\[
    |F|<s\leq 2s-2,
\]
the vertices of $A$ remain connected after deleting $F$. The same
argument applies to $C$.

Moreover, each of the families $F_A$ and $F_C$
contains exactly $s$ hyperedges. Since $|F|<s$, at least one
hyperedge from each family remains after deleting $F$. Thus $b$
remains connected to both $A$ and $C$. Consequently,
$\mathcal{L}_t-F$ is connected, and therefore
\begin{equation}\label{eq:linear-lambda-lower}
    \lambda(\mathcal{L}_t)\geq s.
\end{equation}
Combining \eqref{eq:linear-lambda-upper} and
\eqref{eq:linear-lambda-lower}, we obtain
\begin{equation}\label{eq:linear-gap}
    \lambda(\mathcal{L}_t)
    =
    s
    =
    t+1
    =
    \delta(\mathcal{L}_t)-t.
\end{equation}

It remains to prove the curvature assertion. Let
$x\sim_{\mathcal{L}_t}y$. By Theorem \ref{thm:xia-limit-free},
\[
    \kappa_{\mathrm{LLY}}^{\mathcal{L}_t}(x,y)
    =
    2\kappa_{1/2}^{\mathcal{L}_t}(x,y).
\]
Thus it suffices to prove
\[
    \kappa_{1/2}^{\mathcal{L}_t}(x,y)>0.
\]
For convenience, write
\[
    \mu_v:=\mu_v^{1/2,\mathcal{L}_t}.
\]
Since $d_{\mathcal{L}_t}(x,y)=1$, it is enough to show
\[
    W_1^{\mathcal{L}_t}(\mu_x,\mu_y)<1.
\]

The adjacency relations in $\mathcal{L}_t$ have a particularly
simple form. Every two distinct vertices of $A$ are adjacent, every
two distinct vertices of $C$ are adjacent, the vertex $b$ is
adjacent to every vertex of $A\cup C$, and there is no adjacency
between $A$ and $C$. Hence, up to interchanging $A$ and $C$ and
interchanging $x$ and $y$, there are only two cases.

\medskip
\noindent
\textbf{Case 1: $x,y\in A$.} Both $\mu_x$ and $\mu_y$ are supported on $A\cup\{b\}$. Moreover,
any two distinct vertices of $A\cup\{b\}$ are adjacent in
$\mathcal{L}_t$. Thus
\[
    d_{\mathcal{L}_t}(u,v)
    =
    \begin{cases}
        0, & u=v,\\
        1, & u\neq v,
    \end{cases}
    \qquad
    u,v\in A\cup\{b\}.
\]
Since $x\sim_{\mathcal{L}_t}y$, we have
\[
    \mu_y(x)>0,
\]
while
\[
    \mu_x(x)=\frac12.
\]
Since $\mu_x(x)=1/2$ and $\mu_y(x)>0$, we may keep
$\min\{\mu_x(x),\mu_y(x)\}=\mu_y(x)>0$ units of mass fixed at $x$.
Since every two distinct vertices in $A\cup\{b\}$ are adjacent,
the remaining mass can be transported at distance at most $1$.
Therefore,
\[
W_1^{\mathcal{L}_t}(\mu_x,\mu_y)
\leq
1-\mu_y(x)
<1.
\]
By symmetry, the same conclusion holds when $x,y\in C$.

\medskip
\noindent
\textbf{Case 2: $x\in A$ and $y=b$.} Suppose that $x\in A_i$. Recall that
\[
    d_b^{\mathcal{L}_t}=2s,
    \qquad
    d_x^{\mathcal{L}_t}=2s-1.
\]
By Equation~(2.11), we have
\begin{equation}\label{eq:linear-mu-b}
    \mu_b(z)
    =
    \begin{cases}
        \dfrac12,
            & z=b,\\[6pt]
        \dfrac{1}{8s},
            & z\in A\cup C,\\[6pt]
        0,
            & \text{otherwise},
    \end{cases}
\end{equation}
and
\begin{equation}\label{eq:linear-mu-x}
    \mu_x(z)
    =
    \begin{cases}
        \dfrac12,
            & z=x,\\[6pt]
        \dfrac{1}{4(2s-1)},
            & z=b,\\[6pt]
        \dfrac{1}{4(2s-1)},
            & z\in A_i\setminus\{x\},\\[6pt]
        \dfrac{1}{2(2s-1)},
            & z\in A\setminus A_i,\\[6pt]
        0,
            & \text{otherwise}.
    \end{cases}
\end{equation}

We first note that
\begin{equation}\label{eq:linear-mass-comparison}
    \mu_b(u)<\mu_x(u)
    \qquad
    \text{for every }u\in A.
\end{equation}
Indeed, for $u=x$,
\[
    \mu_x(x)=\frac12>\frac{1}{8s}=\mu_b(x).
\]
If $u\in A_i\setminus\{x\}$, then
\[
    \mu_x(u)
    =
    \frac{1}{4(2s-1)}
    >
    \frac{1}{8s}
    =
    \mu_b(u),
\]
while, for $u\in A\setminus A_i$,
\[
    \mu_x(u)
    =
    \frac{1}{2(2s-1)}
    >
    \frac{1}{8s}
    =
    \mu_b(u).
\]
Thus \eqref{eq:linear-mass-comparison} holds.

By \cite[Lemma 4.1]{BourneEtAl2018}, let $\pi_*$ be an optimal transport plan
from $\mu_b$ to $\mu_x$ satisfying
\[
    \pi_*(u,u)
    =
    \min\{\mu_b(u),\mu_x(u)\}
    \qquad
    (u\in V_t).
\]
It follows from \eqref{eq:linear-mass-comparison} that
\[
    \pi_*(u,u)=\mu_b(u)
    \qquad
    (u\in A).
\]
Since the first marginal of $\pi_*$ is $\mu_b$, this exhausts all the
mass initially located at each vertex of $A$. Hence
\[
    \pi_*(u,v)=0
    \qquad
    (u\in A,\ v\neq u),
\]
and the mass of $\mu_b$ supported on $A$ contributes no transportation
cost.

At $b$, we have
\[
    \mu_x(b)
    =
    \frac{1}{4(2s-1)}
    <
    \frac12
    =
    \mu_b(b),
\]
and hence
\[
    \pi_*(b,b)=\mu_x(b).
\]
Since the second marginal of $\pi_*$ is $\mu_x$, the entire target
mass at $b$ is therefore already supplied by the mass initially
located at $b$. Consequently,
\begin{equation}\label{eq:no-C-to-b}
    \pi_*(u,b)=0
    \qquad
    (u\neq b).
\end{equation}

Moreover, $\mu_x$ is supported on $A\cup\{b\}$. Thus no mass leaving
$b$ can be transported to $C$, and by \eqref{eq:no-C-to-b} any mass
leaving $b$ must be transported to $A$. Since
\[
    d_{\mathcal{L}_t}(b,v)=1
    \qquad
    (v\in A),
\]
we obtain
\begin{align}
    \sum_{v\in V_t}
    \pi_*(b,v)d_{\mathcal{L}_t}(b,v)
    &=
    \sum_{v\in A}\pi_*(b,v)
    \notag\\
    &=
    \mu_b(b)-\pi_*(b,b)
    \notag\\
    &=
    \mu_b(b)-\mu_x(b).
    \label{eq:linear-cost-b}
\end{align}

We next consider the mass initially supported on $C$. Since
\[
    \mu_x(C)=0,
\]
the second marginal condition gives
\[
    \pi_*(u,v)=0
    \qquad
    (v\in C,\ u\in V_t).
\]
In addition, \eqref{eq:no-C-to-b} shows that no mass from $C$ can be
transported to $b$. Therefore every unit of mass initially located in
$C$ must be transported to $A$.

For every $u\in C$ and $v\in A$, we have
\[
    d_{\mathcal{L}_t}(u,v)=2.
\]
It follows that
\begin{align}
    \sum_{\substack{u\in C\\ v\in V_t}}
    \pi_*(u,v)d_{\mathcal{L}_t}(u,v)
    &=
    2\sum_{\substack{u\in C\\ v\in A}}
    \pi_*(u,v)
    \notag\\
    &=
    2\sum_{u\in C}\mu_b(u)
    \notag\\
    &=
    2\mu_b(C).
    \label{eq:linear-cost-C}
\end{align}

Combining \eqref{eq:linear-cost-b} and
\eqref{eq:linear-cost-C}, together with the fact that the mass
initially supported on $A$ has zero transportation cost, yields
\begin{align}
    W_1^{\mathcal{L}_t}(\mu_b,\mu_x)
    &=
    \sum_{u,v\in V_t}
    \pi_*(u,v)d_{\mathcal{L}_t}(u,v)
    \notag\\
    &=
    \mu_b(b)-\mu_x(b)+2\mu_b(C)
    \notag\\
    &=
    \frac12
    -
    \frac{1}{4(2s-1)}
    +
    2\cdot\frac14
    \notag\\
    &=
    1-\frac{1}{4(2s-1)}
    \notag\\
    &<1.
\end{align}
Therefore
\[
    \kappa_{1/2}^{\mathcal{L}_t}(b,x)>0,
\]
and hence
\[
    \kappa_{\mathrm{LLY}}^{\mathcal{L}_t}(b,x)>0.
\]
By symmetry, the same conclusion holds when $x\in C$ and $y=b$.

We have therefore proved that
\[
    \kappa_{\mathrm{LLY}}^{\mathcal{L}_t}(x,y)>0
    \qquad
    \text{for every }
    x\sim_{\mathcal{L}_t}y.
\]
Together with \eqref{eq:linear-gap}, this completes the proof.
\end{proof}

\section*{Acknowledgements}

During the preparation of this manuscript, the author used ChatGPT (OpenAI) for several limited and specific purposes. First, ChatGPT was used to assist with language editing and improving the clarity and readability of the manuscript. Second, it was used as an interactive tool in developing the construction of two key examples (Example \ref{ex:5.1} and \ref{ex:5.2}) presented in Section \ref{sec:5}. The proof of two examples was developed independently by the author without relying on ChatGPT. Third, during the proof of the connected case (Case 2) of Theorem \ref{thm:comb-main}, ChatGPT suggested applying the circuit rank formula \eqref{eq:circuit_rank_formula} to relation \eqref{eq:key-contradiction-inequality}, which helped the author complete the argument.

All AI-assisted suggestions and outputs were critically reviewed and independently verified by the author. The author takes full responsibility for the accuracy, originality, integrity, and final content of the manuscript.



\begin{thebibliography}{99}

\bibitem{AsoodehGaoEvans2018}
S.~Asoodeh, T.~Gao, and J.~A. Evans,
\newblock Curvature of hypergraphs via multi-marginal optimal transport,
\newblock in \emph{2018 IEEE Conference on Decision and Control},
IEEE, 2018, pp.~1180--1185.

\bibitem{BourneEtAl2018}
D.~P. Bourne, D.~Cushing, S.~Liu, F.~Muench, and N.~Peyerimhoff,
\newblock Ollivier--Ricci idleness functions of graphs,
\newblock \emph{SIAM J. Discrete Math.} \textbf{32} (2018), no.~2,
1408--1424.

\bibitem{ChenLiuYou2025}
K.~Chen, S.~Liu, and Z.~You,
\newblock Connectivity versus Lin--Lu--Yau curvature,
\newblock \emph{Int. Math. Res. Not. IMRN} (2025), no.~19,
Article rnaf303.

\bibitem{CoupetteEtAl2023}
C.~Coupette, S.~Dalleiger, and B.~Rieck,
\newblock Ollivier--Ricci curvature for hypergraphs: a unified framework,
\newblock in \emph{The Eleventh International Conference on Learning
Representations}, 2023.

\bibitem{Diestel2017}
R.~Diestel,
\newblock Graph theory,
\newblock \emph{Graph Theory}, 5th ed., Springer, Berlin, 2017.

\bibitem{EidiJost2020}
M.~Eidi and J.~Jost,
\newblock Ollivier Ricci curvature of directed hypergraphs,
\newblock \emph{Sci. Rep.} \textbf{10} (2020), Article 12466.

\bibitem{IkedaEtAl2022}
M.~Ikeda, Y.~Kitabeppu, Y.~Takai, and T.~Uehara,
\newblock Coarse Ricci curvature of hypergraphs and its generalization,
\newblock \emph{Theoret. Comput. Sci.} \textbf{930} (2022), 1--23.

\bibitem{LinLuYau2011}
Y.~Lin, L.~Lu, and S.-T.~Yau,
\newblock Ricci curvature of graphs,
\newblock \emph{Tohoku Math. J.} \textbf{63} (2011), no.~4, 605--627.

\bibitem{LiuXia2026}
S.~Liu and Q.~Xia,
\newblock Edge-connectivity and non-negative Lin--Lu--Yau curvature,
\newblock arXiv:2508.20950v2, 2026.

\bibitem{Ollivier2009}
Y.~Ollivier,
\newblock Ricci curvature of Markov chains on metric spaces,
\newblock \emph{J. Funct. Anal.} \textbf{256} (2009), no.~3, 810--864.

\bibitem{TianZhao2025}
Y.~Tian and L.~Zhao,
\newblock Lin--Lu--Yau Ricci curvature on hypergraphs,
\newblock arXiv:2507.04109, 2025.

\bibitem{Villani2003}
C.~Villani,
\newblock \emph{Topics in Optimal Transportation},
\newblock Graduate Studies in Mathematics, vol.~58,
American Mathematical Society, Providence, RI, 2003.

\bibitem{Xia2026}
Q.~Xia,
\newblock Idleness functions for Ollivier--Ricci curvature on hypergraphs,
\newblock arXiv:2608.01970, 2026.

\end{thebibliography}
\end{document}